\numberwithin{equation}{section}
\newcommand{\fra}{\mathfrak{a}}
\newtheorem{proposition}{Proposition}[section]
\newtheorem{theorem}[proposition]{Theorem}
\newtheorem{lemma}[proposition]{Lemma}
\newtheorem{corollary}[proposition]{Corollary}
\theoremstyle{definition}
\newtheorem{remark}[proposition]{Remark}
\title{The incompressible Navier-Stokes system\\ with time-dependent  Robin-type  boundary conditions}
\author{Sylvie Monniaux\,\thanks{Aix-Marseille Universit\'e, CNRS, Centrale Marseille, 
I2M, UMR 7373 -- 13453 Marseille, France}\,  \footnotemark[3] 
\and El Maati Ouhabaz\,\thanks{Universit\'e de Bordeaux,
 Institut de Math\'ematiques de Bordeaux IMB, CNRS UMR 5251 -- 33405 Bordeaux, France}\,
\thanks{The research of both authors was partially supported by the ANR project HAB, 
ANR-12-BS01-0013-02 and ANR-12-BS01-0013-03}}
\date{}
\begin{document}

\maketitle

\begin{abstract}
We show that the incompressible 3D Navier-Stokes system in a ${\mathscr{C}}^{1,1}$ 
bounded domain or a bounded convex domain $\Omega$ with a non penetration condition 
$\nu\cdot u=0$ at the boundary $\partial\Omega$ together with a time-dependent Robin 
boundary condition of the type $\nu\times{\rm curl}\,u=\beta(t) u$ on $\partial\Omega$ admits a 
solution with enough regularity provided the initial condition is small enough in an appropriate 
functional space. 
\end{abstract}

\section{Introduction} 
\label{sec:intro}

We consider the following incompressible Navier-Stokes system in a (sufficiently smooth) bounded
domain $\Omega\subset{\mathbb{R}}^3$ on a time interval $[0,\tau]$ 
\begin{equation}  
\label{NS} \tag{NS}
\left\{
\begin{array}{rclcl}
\partial_t u-\Delta u + \nabla p +(u\cdot\nabla)u &=& 0 &\mbox{in}&[0,\tau]\times\Omega
\\[4pt]
{\rm div}\,u &=& 0 &\mbox{in}&[0,\tau]\times\Omega 
\end{array}\right.
\end{equation}
where ${\mathbb{S}}(u,p):= \frac{1}{2}\bigl(\nabla u+(\nabla u)^\top\bigr)-p\,{\rm Id}$ is the 
Cauchy stress tensor applied to $(u,p)$ supplemented with the conditions on the boundary
$\partial\Omega$ ($\nu$ denotes the outer unit normal):
\begin{equation}
\label{NBC}\tag{Nbc}
\left\{
\begin{array}{rclcl}
\nu \cdot u &=& 0 &\mbox{on} &[0,\tau]\times\partial\Omega 
\\[4pt]
\bigl[{\mathbb{S}}(u,p)\nu\bigr]_{\rm tan} + B u &=&0&\mbox{on} &[0,\tau]\times\partial\Omega
\end{array}
\right.
\end{equation}
and the initial condition 
\begin{equation}
\label{IC}\tag{IC}
u(0)=u_0\quad\mbox{in }\Omega.
\end{equation}
As usual $[w]_{\rm tan}$ denotes the tangential part of $w$, that is $[w]_{\rm tan}=w-(\nu\cdot w)\nu$.
The conditions (Nbc) are referred to in the literature as Navier's boundary conditions and
were introduced by Navier in his lecture at the Acad\'emie royale des Sciences in 1822 \cite{Na1822}. 
They describe the fact that the fluid cannot escape from the domain $\Omega$ ($\nu\cdot u=0$) and 
that the fluid slips with a friction described by a matrix $B$ on $\partial\Omega$
($\bigl[{\mathbb{S}}(u,p)\nu\bigr]_{\rm tan} + B u=0$). Such conditions have been recently derived from
homogenization of rough boundaries, see e.g. \cite{JM01}, \cite{BGV08}, \cite{GVM10}, \cite{BFN10}.

First we transform the system (NS) with boundary conditions (Nbc) and initial condition (IC) 
into the following ``Robin-Navier-Stokes'' problem
\begin{equation}  
\label{RNS} \tag{RNS}
\left\{
\begin{array}{rclcl}
\partial_t u-\Delta u + \nabla \pi -u\times{\rm curl}\,u &=& 0 &\mbox{in}&[0,\tau]\times\Omega
\\[4pt]
{\rm div}\,u &=& 0 &\mbox{in}&[0,\tau]\times\Omega 
\\[4pt]
\nu \cdot u \ =\ 0,\quad \nu\times{\rm curl}\,u &=&\beta u&\mbox{on} &[0,\tau]\times\partial\Omega
\\[4pt]
u(0)&=&u_0&\mbox{in}&\Omega.
\end{array}\right.
\end{equation}
This is based on the identities $(u\cdot\nabla)u=-u\times{\rm curl}\,u+\frac{1}{2}\nabla|u|^2$
and $\bigl[{\mathbb{S}}(u,p) \nu\bigr]_{\rm tan} = - \nu \times {\rm curl}\,u +2{\mathcal{W}}u$ 
on the boundary $\partial \Omega$ (see, e.g., \cite[Section~2]{MM09b}), so that
$\beta=2{\mathcal{W}}+B$, and $\pi=p+\frac{1}{2}|u|^2$.
Here ${\mathcal{W}}$ is the Weingarten map (for properties of ${\mathcal{W}}$, see, e.g., 
\cite[Section~6]{MM09b}; in particular, ${\mathcal{W}}u = 0$ on flat parts of the boundary).
We prove, in the Hilbert space setting, existence and uniqueness of solutions of (RNS)
for time-dependent and boundary-dependent symmetric positive matrices 
$\beta:[0, \tau]\times\partial\Omega\to{\mathscr{M}}_3({\mathbb{R}})$ uniformly bounded in 
$x\in\partial\Omega$ and piecewise H\"older-continuous in $t\in[0,\tau]$. For precise 
hypotheses on $\beta$, we refer to Section~\ref{secRS} and Section~\ref{sec:main} below.
Note that the condition $\beta\ge0$ implies the geometric condition on the friction 
(symmetric) matrix $B$: $B\ge-2{\mathcal{W}}$. In particular, if $\Omega$ is convex,
${\mathcal{W}}\ge 0$, so that we can treat any nonnegative friction matrix $B$.
The main result  is the following

\begin{theorem}
\label{thm-intro}
Let $\Omega\subset{\mathbb{R}}^3$ be a bounded ${\mathscr{C}}^{1,1}$ or convex domain
and let $\tau>0$. There exists $\epsilon>0$ such that for all
initial condition $u_0 \in L^2(\Omega,{\mathbb{R}}^3)$ with ${\rm div}\,u_0=0$
in $\Omega$, $\nu\cdot u_0=0$ on $\partial\Omega$ and 
${\rm curl}\,u_0\in L^2(\Omega,{\mathbb{R}}^3)$, 
$\|u_0\|_2+\|{\rm curl}\,u_0\|_2 \le \epsilon$, there exists a unique  $(u, \pi)$ satisfying
(RNS) for a.e. $(t,x) \in [0, \tau] \times \Omega$. In addition, 
$u \in H^1(0,\tau,L^2(\Omega,{\mathbb R}^3))$, $\Delta u \in L^2(0,\tau,L^2(\Omega, {\mathbb R}^3))$, 
$\pi \in L^2(0,\tau,H^1(\Omega))$ and there exists 
a constant $C$ independent of $u$ and $\pi$ such that 
$$
\|u\|_{H^1(0,\tau,L^2(\Omega,{\mathbb R}^3))} + \|-\Delta u\|_{L^2(0,\tau,L^2(\Omega,{\mathbb{R}}^3))}
+\|\nabla\pi\|_{L^2(0,\tau,L^2(\Omega,{\mathbb{R}}^3))} \le C \epsilon.
$$
\end{theorem}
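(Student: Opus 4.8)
The plan is to reformulate (RNS) as an abstract semilinear non-autonomous Cauchy problem and to solve it by a contraction argument on the maximal‑regularity space, using the linear theory of Section~\ref{secRS}. Write $H$ for the space of $L^2$ divergence‑free vector fields on $\Omega$ with vanishing normal trace, put $V=\{u\in H:\mathrm{curl}\,u\in L^2(\Omega)\}$, and for each $t$ let $A_{\beta(t)}$ be the self‑adjoint nonnegative operator on $H$ associated with the form
$$
a_t(u,v)=\int_\Omega\mathrm{curl}\,u\cdot\mathrm{curl}\,v\,dx+\int_{\partial\Omega}\beta(t)\,u\cdot v\,d\sigma,\qquad u,v\in V
$$
(self‑adjoint and $\ge0$ because $\beta(t)$ is symmetric and $\ge0$; one checks its form norm is equivalent to $\|\cdot\|_V$). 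Then (RNS) is equivalent to $u'(t)+A_{\beta(t)}u(t)=\mathbb{P}\bigl(u(t)\times\mathrm{curl}\,u(t)\bigr)$, $u(0)=u_0$, with $\mathbb{P}$ the Leray projection onto $H$ and the pressure reconstructed afterwards; note the hypotheses on $u_0$ say precisely that $u_0\in V$ with $\|u_0\|_V\le\epsilon$. From Section~\ref{secRS} I would take: \emph{(i)} non‑autonomous maximal $L^2$‑regularity of $(A_{\beta(t)})_{t\in[0,\tau]}$, i.e.\ for every $f\in L^2(0,\tau;H)$ and $u_0\in V$ a unique $u\in\mathbb{E}_\tau:=H^1(0,\tau;H)\cap L^2(0,\tau;D(A_{\beta(\cdot)}))$, moreover $u\in C([0,\tau];V)$, with $\|u\|_{\mathbb{E}_\tau}\le C_\tau(\|f\|_{L^2(0,\tau;H)}+\|u_0\|_V)$; and \emph{(ii)} the elliptic regularity furnished by the $\mathscr{C}^{1,1}$ or convexity hypothesis, namely the Gaffney‑type embedding $V\hookrightarrow H^1(\Omega)$ and the gain of regularity of $\mathrm{curl}\,u$ for $u\in D(A_{\beta(t)})$.

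The core estimate is the bilinear bound: with $N(v,w):=\mathbb{P}(v\times\mathrm{curl}\,w)$,
$$
\|N(v,w)\|_{L^2(0,\tau;H)}\le C\,\|v\|_{\mathbb{E}_\tau}\,\|w\|_{\mathbb{E}_\tau}.
$$
For fixed $t$, H\"older's inequality gives $\|v\times\mathrm{curl}\,w\|_{L^2(\Omega)}\le\|v\|_{L^6(\Omega)}\,\|\mathrm{curl}\,w\|_{L^3(\Omega)}$. The first factor is bounded in $L^\infty(0,\tau)$ by $\|v\|_{C([0,\tau];V)}$ through $V\hookrightarrow H^1\hookrightarrow L^6$; for the second I would use that on a $\mathscr{C}^{1,1}$ or convex domain $u\in D(A_{\beta(t)})$ forces $\mathrm{curl}\,u$ to gain regularity, landing in $H^{1/2}(\Omega)\hookrightarrow L^3(\Omega)$, so that $\mathrm{curl}\,w$ is bounded in $L^2(0,\tau;L^3(\Omega))$ by $\|w\|_{L^2(0,\tau;D(A_{\beta(\cdot)}))}$. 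Multiplying ($\infty\cdot2\to2$ in $t$) gives the claim, and the same computation controls $N(v_1,v_1)-N(v_2,v_2)=N(v_1-v_2,v_1)+N(v_2,v_1-v_2)$. This is the step where the regularity of $\Omega$ is indispensable: on a merely Lipschitz domain both $V\hookrightarrow H^1$ and the curl‑gain fail.

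The fixed point then closes in the usual way. For $R>0$ set $B_R=\{v\in\mathbb{E}_\tau:v(0)=u_0,\ \|v\|_{\mathbb{E}_\tau}\le R\}$ and let $\Phi(v)$ be the solution given by \emph{(i)} with datum $f=N(v,v)$. By \emph{(i)}--\emph{(ii)}, $\|\Phi(v)\|_{\mathbb{E}_\tau}\le C_\tau(C R^2+\epsilon)$ and $\|\Phi(v_1)-\Phi(v_2)\|_{\mathbb{E}_\tau}\le 2C_\tau C R\,\|v_1-v_2\|_{\mathbb{E}_\tau}$. Taking $R=2C_\tau\epsilon$ and then $\epsilon=\epsilon(\tau)$ small enough, $\Phi$ maps $B_R$ into itself and is a strict contraction, hence has a unique fixed point $u$; it solves the abstract equation, lies in $H^1(0,\tau;L^2)\cap L^2(0,\tau;D(A_{\beta(\cdot)}))\cap C([0,\tau];V)$, and satisfies $\|u\|_{\mathbb{E}_\tau}\le R\le C\epsilon$. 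The pressure $\pi$ is recovered (say, normalized to zero spatial mean) by solving the elliptic problem obtained on applying $\mathrm{div}$ and $\nu\cdot$ to $\partial_t u-\Delta u+\nabla\pi=u\times\mathrm{curl}\,u$; the regularity already in hand for $u$ and $u\times\mathrm{curl}\,u$ yields $\pi\in L^2(0,\tau;H^1(\Omega))$, and then $\Delta u=\partial_t u+\nabla\pi-u\times\mathrm{curl}\,u\in L^2(0,\tau;L^2(\Omega))$, with all norms $\le C\epsilon$. Uniqueness in the full class follows by running the bilinear estimate on a short initial interval and continuing.

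The main obstacle is the linear input \emph{(i)}: obtaining genuine ($H^1$‑in‑time) maximal $L^2$‑regularity, rather than the weaker Lions regularity, for the family $A_{\beta(t)}$ whose time dependence sits inside the boundary condition $\nu\times\mathrm{curl}\,u=\beta(t)u$. This is precisely where the piecewise H\"older continuity of $t\mapsto\beta(t)$ is used—freezing $\beta$ on each piece and absorbing the increment as a small perturbation of the form $a_t$—and it is the business of Section~\ref{secRS}; granting it, the rest is comparatively soft, the one delicate point being the sharp elliptic regularity underlying the bilinear estimate.
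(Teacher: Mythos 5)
Your proposal is correct and follows essentially the same route as the paper: non-autonomous maximal $L^2$-regularity for $A_{\beta(t)}$ on $H$, the bilinear estimate $\|v\times{\rm curl}\,w\|_{L^2(\Omega)}\le\|v\|_{L^6}\|{\rm curl}\,w\|_{L^3}$ obtained from $V\hookrightarrow H^1\hookrightarrow L^6$ together with the $H^{1/2}\hookrightarrow L^3$ gain for ${\rm curl}\,u$ when $u\in D(A_{\beta(t)})$, and a Picard contraction in the maximal-regularity space, with the pressure recovered afterwards. The one caveat is that for H\"older exponent $\alpha>1/2$ the paper only proves the embedding of the maximal-regularity space into $L^\infty(0,\tau;V)$ (Proposition~\ref{pro-bdd}), not the continuity $u\in{\mathscr{C}}([0,\tau],V)$ you assert in \emph{(i)} (that stronger statement is only claimed for $\alpha=1$), but $L^\infty$ in time is all your bilinear estimate actually uses.
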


In the case where $\beta(t,x)=0$ for all $(t,x)\in[0,\tau]\times\partial\Omega$, the system (RNS)
has been studied in \cite{MM09b}, in the case of Lipschitz domains for initial conditions in $L^3$. 
For Dirichlet boundary conditions $u=0$ on $\partial\Omega$,
which correspond to $\beta=\infty$, we refer to the classical results by Fujita and Kato \cite{FK64}
(see also \cite{Mo06}, \cite{MM08} for the case of less regular domains).

The method to prove Theorem~\ref{thm-intro} relies on the study of operators defined by forms
and recent results on maximal regularity  for non-autonomous linear evolution equations. This latter 
property is the key ingredient to treat the non linearity by appealing to classical fixed point arguments.

The paper is organized as follows. Section~\ref{sec:def} is devoted to analytical tools necessary 
for our approach of the problem. In Section~\ref{secRS}, we define the (time dependent) 
Robin Stokes operator. We use recent results on maximal regularity in Section~\ref{sec:main} in 
order to obtain regularity properties of the solution of the linearized (RNS) system. The proof of 
Theorem~\ref{thm-intro} is given in Section~\ref{secNS}.


\section{Background material}
\label{sec:def}

Throughout this section,  $\Omega\subset{\mathbb{R}}^3$ will be a bounded domain which is 
either convex or ${\mathscr{C}}^{1,1}$. 
We denote by $\partial\Omega$ its boundary. It is endowed with the surface measure 
${\rm d}\sigma$. 
It is a classical fact (see, e.g., \cite[Th\'eor\`eme~8.3]{LiMa68} for smooth domains
and \cite[Ch.\,2, Th\'eor\`eme~5.5]{Necasf} or \cite[Ch.\,2, Theorem~5.5]{Necas} for Lipschitz domains)
$$
{\rm Tr}_{|_{\partial\Omega}} : H^1(\Omega) \to  H^{1/2}(\partial\Omega) \hookrightarrow 
L^2(\partial\Omega, {\rm d}\sigma),
$$
the latter embedding being compact. 
\begin{itemize}
\item[$(i)$]
For $u\in L^2(\Omega,{\mathbb{R}}^3)$ such that ${\rm div}\,u \in L^2(\Omega)$, the 
normal component $\nu\cdot u$ of $u$ on $\partial\Omega$ is defined in a weak sense 
in the negative Sobolev space $H^{-1/2}(\partial\Omega)$ by
\begin{equation}
\label{nudotu}
_{H^{-1/2}(\partial\Omega)}\langle \nu\cdot u,\varphi\rangle_{H^{1/2}(\partial\Omega)} 
= \langle {\rm div}\,u,\phi\rangle_\Omega + \langle u,\nabla \phi\rangle_\Omega,
\end{equation}
for all $\varphi \in H^{1/2}(\partial\Omega)$, where $\phi$ belongs to the Sobolev space
$H^1(\Omega)$ with ${\rm Tr}_{|_{\partial\Omega}} \phi=\varphi$.
Here, $\langle\cdot,\cdot\rangle_\Omega$
denotes either the scalar or the vector-valued scalar product in $L^2$ 
defined over $\Omega$. The notation $_{V'}\langle \cdot,\cdot \rangle_V$ means the duality 
between $V'$ and $V$. 
\item[$(ii)$]
For $u\in L^2(\Omega,{\mathbb{R}}^3)$ such that  ${\rm curl}\,u\in L^2(\Omega,{\mathbb{R}}^3)$, 
the tangential component $\nu\times u$ of $u$ on $\partial\Omega$ is defined in a weak sense 
in $H^{-1/2}(\partial\Omega,{\mathbb{R}}^3)$ by
\begin{equation}
\label{nutimesu}
_{H^{-1/2}(\partial\Omega,{\mathbb{R}}^3)}\langle \nu\times u,
\varphi\rangle_{H^{1/2}(\partial\Omega,{\mathbb{R}}^3)} 
= \langle {\rm curl}\,u,\phi\rangle_\Omega - \langle u, {\rm curl}\,\phi\rangle_\Omega,
\end{equation}
for all $\varphi \in H^{1/2}(\partial\Omega,{\mathbb{R}}^3)$ where 
$\phi \in H^1(\Omega,{\mathbb{R}}^3)$ with ${\rm Tr}_{|_{\partial\Omega}} \phi=\varphi$.
As before, $\langle\cdot,\cdot\rangle_\Omega$
denotes the vector-valued scalar product in $L^2$ 
defined over $\Omega$.
\end{itemize}

The following result, valid for Lipschitz domains, can be found in \cite{Cos90} (see also \cite{Mo14}). 

\begin{proposition}
\label{sobolev}
There exists a constant $C>0$ such that for all $u\in L^2(\Omega,{\mathbb{R}}^3)$ satisfying
${\rm div}\,u\in L^2(\Omega,{\mathbb{R}})$, ${\rm curl}\,u\in L^2(\Omega,{\mathbb{R}}^3)$
and either  $\nu\cdot u\in L^2(\partial\Omega)$ or 
$\nu\times u\in L^2(\partial\Omega,{\mathbb{R}}^3)$  we have 
${\rm Tr}_{|_{\partial\Omega}} u \in L^2(\partial \Omega,{\mathbb{R}}^3)$ with the estimate 
$$
\|{\rm Tr}_{|_{\partial\Omega}} u \|_{L^2(\partial\Omega,{\mathbb{R}}^3)} 
\le C\,\Bigl(\|u\|_2 + \|{\rm div}\,u\|_2 + \|{\rm curl}\,u\|_2
+\min\bigl\{\|\nu\cdot u\|_{L^2(\partial\Omega)}, \|\nu\times u\|_{L^2(\partial\Omega,{\mathbb{R}}^3)}\bigr\}
\Bigr).
$$
Moreover, $u\in H^{1/2}(\Omega,{\mathbb{R}}^3)$ and
\begin{equation}
\label{sobolev1}
\|u\|_{H^{1/2}(\Omega,{\mathbb{R}}^3)}\le 
C\,\Bigl(\|u\|_2 + \|{\rm div}\,u\|_2 + \|{\rm curl}\,u\|_2
+\min\bigl\{\|\nu\cdot u\|_{L^2(\partial\Omega)}, \|\nu\times u\|_{L^2(\partial\Omega,{\mathbb{R}}^3)}\bigr\}
\Bigr).
\end{equation}
\end{proposition}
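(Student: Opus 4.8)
The plan is to prove Proposition~\ref{sobolev} by combining a classical regularity/embedding estimate with the trace theorem stated just above. The core fact I would invoke is the following: for a bounded Lipschitz (hence in particular ${\mathscr{C}}^{1,1}$ or convex) domain $\Omega$, a vector field $u\in L^2(\Omega,{\mathbb{R}}^3)$ with ${\rm div}\,u\in L^2$, ${\rm curl}\,u\in L^2$ and either $\nu\cdot u\in L^2(\partial\Omega)$ or $\nu\times u\in L^2(\partial\Omega,{\mathbb{R}}^3)$ already belongs to $H^{1/2}(\Omega,{\mathbb{R}}^3)$, with the norm estimate \eqref{sobolev1}. This is precisely the content of the results of Costabel \cite{Cos90} (and the later exposition \cite{Mo14}); the two cases are dual to each other, which is why only the minimum of the two boundary norms appears on the right-hand side. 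So the first step is to cite that result, state the decomposition argument behind it (splitting $u$ into a gradient part $\nabla q$ and a curl part ${\rm curl}\,w$ via the Helmholtz–Hodge decomposition adapted to the relevant boundary condition, then applying elliptic regularity on each piece — the gradient part solves a Neumann or Dirichlet problem for the Laplacian, the curl part a vector Laplace problem), and record that $\Omega$ being ${\mathscr{C}}^{1,1}$ or convex is exactly what guarantees the $H^{3/2}$ (hence $H^1$ for the fields) regularity needed.

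Once $u\in H^{1/2}(\Omega,{\mathbb{R}}^3)$ with the stated bound is established, the trace estimate is almost immediate: the trace operator ${\rm Tr}_{|_{\partial\Omega}}$ maps $H^{s}(\Omega)$ continuously into $H^{s-1/2}(\partial\Omega)$ for $s>1/2$, but at the endpoint $s=1/2$ one does not get a trace in general. The point here is that we have more than mere $H^{1/2}$ regularity: we additionally know a priori that one of $\nu\cdot u$, $\nu\times u$ lies in $L^2(\partial\Omega)$, and this extra boundary information, combined with $H^{1/2}$ interior regularity, is enough to conclude ${\rm Tr}_{|_{\partial\Omega}}u\in L^2(\partial\Omega,{\mathbb{R}}^3)$. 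Concretely I would argue as follows: knowing $\nu\cdot u\in L^2$ (resp.\ $\nu\times u\in L^2$) together with $u\in H^{1/2}(\Omega)$ with ${\rm div}\,u, {\rm curl}\,u\in L^2$, one can reconstruct the full trace by writing ${\rm Tr}\,u = (\nu\cdot u)\nu - \nu\times(\nu\times u)$ and controlling the "missing" tangential (resp.\ normal) component by a duality/integration-by-parts pairing against test fields in $H^1(\Omega,{\mathbb{R}}^3)$, using \eqref{nudotu} and \eqref{nutimesu} to turn the boundary pairing into interior integrals that are bounded by $\|u\|_2+\|{\rm div}\,u\|_2+\|{\rm curl}\,u\|_2$ times the $H^1$-norm of the test field. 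This shows the a priori-distributional component of the trace is in fact an $L^2$ function with the asserted bound.

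I expect the main obstacle to be precisely this last reconstruction step, i.e.\ upgrading from "$u\in H^{1/2}(\Omega)$ plus one boundary component in $L^2$" to "the full trace is in $L^2$ with a quantitative bound". The subtlety is that $H^{1/2}(\Omega)$ is the borderline space for which traces need not exist, so one cannot simply invoke the trace theorem; the argument genuinely uses the div–curl structure. In the ${\mathscr{C}}^{1,1}$ case one can localize and flatten the boundary, reducing to a half-space computation where the tangential and normal derivatives decouple cleanly; in the convex case, where one cannot flatten, one instead relies on the fact (due to Grisvard and others, used also in \cite{MM09b}) that convex domains behave like smooth ones for the scalar Laplacian with Dirichlet or Neumann data, so the Helmholtz decomposition pieces still land in $H^1$. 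The cleanest route, and the one I would adopt, is to treat this as a black box by citing \cite{Cos90} for the $H^{1/2}$ interior estimate and \cite{Mo14} for the companion $L^2$-trace statement, and then devote the written proof to spelling out the two dual cases ($\nu\cdot u\in L^2$ versus $\nu\times u\in L^2$) and checking that the constant $C$ depends only on $\Omega$, so that both appear symmetrically through the $\min$ in the final estimate.
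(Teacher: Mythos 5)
The paper offers no proof of this proposition: it is quoted as a known result of Costabel \cite{Cos90} (see also \cite{Mo14}), valid for general Lipschitz domains, so your final decision to treat it as a black box and cite those two references is exactly what the paper does. Two cautions about the sketch you give before retreating to the citation, in case you ever try to fill it in. First, Costabel's theorem holds on Lipschitz domains and is proved via a layer-potential representation and a Rellich-type identity, not via $H^{3/2}$ elliptic regularity of the Helmholtz pieces; $H^{3/2}$ regularity is both false on general Lipschitz domains and stronger than what the statement asserts (the conclusion for $u$ is only $H^{1/2}(\Omega)$), and the ${\mathscr{C}}^{1,1}$/convexity hypothesis of the paper is used elsewhere (for the embedding $W_{T,N}\hookrightarrow H^1$ via \cite{ABDG98}), not for this proposition. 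Second, your proposed reconstruction of the missing trace component by pairing against $H^1(\Omega,{\mathbb{R}}^3)$ test fields through \eqref{nudotu}--\eqref{nutimesu} only controls that component in $H^{-1/2}(\partial\Omega)$, not in $L^2(\partial\Omega)$, so that step is a genuine gap if it were meant as a proof rather than as motivation; the $L^2$ trace bound really does require the harmonic-analysis input behind \cite{Cos90}.
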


Moving on, let $W_T$ and $W_N$ be the spaces defined by
\begin{equation}
\label{defWT}
W_T=\bigl\{u\in L^2(\Omega,{\mathbb{R}}^3); {\rm div}\,u\in L^2(\Omega), 
{\rm curl}\,u\in L^2(\Omega,{\mathbb{R}}^3) \mbox{ and } \nu\cdot u=0 \mbox{ on }\partial\Omega\bigr\}
\end{equation}
and 
\begin{equation}
\label{defWN}
W_N=\bigl\{u\in L^2(\Omega,{\mathbb{R}}^3); {\rm div}\,u\in L^2(\Omega), 
{\rm curl}\,u\in L^2(\Omega,{\mathbb{R}}^3) \mbox{ and } \nu\times u=0 \mbox{ on }\partial\Omega\bigr\}
\end{equation}
both endowed with the norm
\begin{equation}
\label{normW}
\|u\|_W=\|u\|_{L^2(\Omega,{\mathbb{R}}^3)}+\|{\rm div}\,u\|_{L^2(\Omega)}
+\|{\rm curl}\,u\|_{L^2(\Omega,{\mathbb{R}}^3)}, \quad u\in W.
\end{equation}
It is easy to see that $W_{T,N}$ are Hilbert spaces. Note also that since $\Omega$ is either convex or 
${\mathscr{C}}^{1,1}$, the spaces $W_{T,N}$ are contained in $H^1(\Omega,{\mathbb{R}}^3)$ 
(with continuous embedding). See \cite[Theorem~2.9, Theorem~2.12 and Theorem~2.17]{ABDG98}.
Thus, there exists a constant $C > 0$ such that for all $u \in W_{T,N}$
\begin{equation}
\label{WdansH1}
\|u\|_{H^1(\Omega)} \le C (\|u\|_{L^2(\Omega,{\mathbb{R}}^3)}+\|{\rm div}\,u\|_{L^2(\Omega)}
+\|{\rm curl}\,u\|_{L^2(\Omega,{\mathbb{R}}^3)}).
\end{equation}
In particular, the trace operator  
$$
{\rm Tr}_{|_{\partial\Omega}} : W_{T,N} \to H^{1/2}(\partial\Omega,{\mathbb{R}}^3)
$$
is continuous.

\vspace{.2cm}
Next, we define the Hodge Laplacians with absolute and relative boundary conditions. 
Although these operators do not appear explicitly in our main results they will be useful 
for the proof of the description of the domain of Stokes operator with time dependent Robin 
boundary condition. 
 
We define on $L^2(\Omega,{\mathbb{R}}^3)$ the two bilinear symmetric forms
\begin{equation}
\label{b0}
b_0(u,v)=\langle {\rm div}\,u,{\rm div}\,v\rangle_\Omega+
\langle {\rm curl}\,u,{\rm curl}\,v\rangle_\Omega,\quad u,v\in W_T
\end{equation}
and
\begin{equation}
\label{b1}
b_1(u,v)=\langle {\rm div}\,u,{\rm div}\,v\rangle_\Omega+
\langle {\rm curl}\,u,{\rm curl}\,v\rangle_\Omega,\quad u,v\in W_N.
\end{equation}
Both forms $b_0$ and $b_1$ are closed. Therefore, there exist two operators $B_{0,0}: W_T \to W_T'$ 
associated with $b_0$ ($B_{0,0}u=-\Delta u$) and $B_{1,0}: W_N \to W_N'$ 
($B_{1,0}u=-\Delta u$) associated with $b_1$ in the sense that 
$$
b_0(u,v) =\  _{W_T'}\langle B_{0,0}u, v \rangle_{W_T}, \quad u, v \in W_T
$$
and
$$
b_1(u,v) =\  _{W_N'}\langle B_{1,0}u, v \rangle_{W_N}, \quad u, v \in W_N.
$$
The part $B_0$ of $B_{0,0}$ on $L^2(\Omega,{\mathbb{R}}^3)$, i.e.,
\begin{equation}
\label{def:B0}
D(B_0) := \{ u \in W_T, \exists\,v \in L^2(\Omega,{\mathbb{R}}^3): b_0(u,\phi) = 
\langle v, \phi \rangle_\Omega \ \forall\,\phi \in W_T \},\quad  B_0 u := v,
\end{equation}
and the part $B_1$ of $B_{1,0}$ on $L^2(\Omega,{\mathbb{R}}^3)$, i.e.,
\begin{equation}
\label{def:B1}
D(B_1) := \{ u \in W_N, \exists\,v \in L^2(\Omega,{\mathbb{R}}^3): b_1(u,\phi) = 
\langle v, \phi \rangle_\Omega \ \forall\,\phi \in W_N \},\quad  B_1 u := v,
\end{equation}
are self-adjoint operators on $L^2(\Omega,{\mathbb{R}}^3)$. 

\begin{proposition}
\label{curlB0=B1curl}
The domains of $B_0$ and $B_1$ have the following description
\begin{align}
\label{D(B0)}
D(B_0)=&\bigl\{u\in L^2(\Omega,{\mathbb{R}}^3); {\rm div}\,u\in H^1(\Omega), 
{\rm curl}\,u\in L^2(\Omega,{\mathbb{R}}^3), {\rm curl}\,{\rm curl}\,u\in L^2(\Omega,{\mathbb{R}}^3)
\nonumber\\[4pt]
&\mbox{ and }\nu\cdot u=0, \nu\times{\rm curl}\,u=0\mbox{ on }\partial\Omega\bigr\}
\end{align}
and
\begin{align}
\label{D(B1)}
D(B_1)=&\bigl\{u\in L^2(\Omega,{\mathbb{R}}^3); {\rm div}\,u\in H^1(\Omega), 
{\rm curl}\,u\in L^2(\Omega,{\mathbb{R}}^3), {\rm curl}\,{\rm curl}\,u\in L^2(\Omega,{\mathbb{R}}^3)
\nonumber\\[4pt]
&\mbox{ and }\nu\times u=0, {\rm div}\,u=0\mbox{ on }\partial\Omega\bigr\}.
\end{align}
Moreover, for $u\in L^2(\Omega,{\mathbb{R}}^3)$ such that 
${\rm curl}\,u\in L^2(\Omega,{\mathbb{R}}^3)$,
the following commutator property occurs for all $\varepsilon>0$
\begin{equation}
\label{curlB}
{\rm curl}\,(1+\varepsilon B_0)^{-1}u=(1+\varepsilon B_1)^{-1}{\rm curl}\,u.
\end{equation}
\end{proposition}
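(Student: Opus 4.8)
The plan is to establish the two domain descriptions first and then to read off \eqref{curlB} from them. For the inclusion ``$\supseteq$'' in \eqref{D(B0)}, I would take $u$ in the right-hand side and, for arbitrary $\phi\in W_T$, use the weak Green formulas behind \eqref{nudotu} and \eqref{nutimesu}: since ${\rm div}\,u\in H^1(\Omega)$ one gets $\langle{\rm div}\,u,{\rm div}\,\phi\rangle_\Omega=-\langle\nabla{\rm div}\,u,\phi\rangle_\Omega$ (the boundary contribution drops because $\nu\cdot\phi=0$), and since ${\rm curl}\,u,{\rm curl}\,{\rm curl}\,u\in L^2(\Omega,{\mathbb{R}}^3)$ one gets $\langle{\rm curl}\,u,{\rm curl}\,\phi\rangle_\Omega=\langle{\rm curl}\,{\rm curl}\,u,\phi\rangle_\Omega$ (the boundary contribution $\langle\nu\times{\rm curl}\,u,{\rm Tr}_{|_{\partial\Omega}}\phi\rangle_{\partial\Omega}$ drops because $\nu\times{\rm curl}\,u=0$). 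Adding, $b_0(u,\phi)=\langle-\nabla{\rm div}\,u+{\rm curl}\,{\rm curl}\,u,\phi\rangle_\Omega=\langle-\Delta u,\phi\rangle_\Omega$ with $-\Delta u\in L^2(\Omega,{\mathbb{R}}^3)$, so $u\in D(B_0)$ and $B_0u=-\Delta u$. The same computation, read backwards, shows that once $u\in D(B_0)$ is known to satisfy ${\rm div}\,u\in H^1$ and ${\rm curl}\,{\rm curl}\,u\in L^2$, the identity $b_0(u,\phi)=\langle B_0u,\phi\rangle_\Omega$ forces $B_0u=-\Delta u$ and $\nu\times{\rm curl}\,u=0$ (using that the traces of $W_T$-functions run through all tangential fields of $H^{1/2}(\partial\Omega,{\mathbb{R}}^3)$), while $\nu\cdot u=0$ is built into $W_T$. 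The argument for \eqref{D(B1)} is parallel: now $\phi\in W_N$ has tangential trace zero but $\nu\cdot\phi$ arbitrary, so the divergence boundary term survives and forces ${\rm div}\,u=0$ on $\partial\Omega$, whereas $\nu\times u=0$ is built into $W_N$.

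The real point is therefore to prove, for $u\in D(B_0)$ (resp.\ $u\in D(B_1)$), that ${\rm div}\,u\in H^1(\Omega)$; once this holds, ${\rm curl}\,{\rm curl}\,u=-\Delta u+\nabla{\rm div}\,u\in L^2$ and the previous paragraph applies. I would obtain it by testing the form identity against gradients, combined with a Helmholtz--Weyl decomposition of $v:=B_0u$ (resp.\ $B_1u$) suited to the boundary condition. For $B_0$: write $v=\nabla\zeta+v_\sigma$ with $\zeta\in H^1(\Omega)$ and ${\rm div}\,v_\sigma=0$, $\nu\cdot v_\sigma=0$, and test $b_0(u,\phi)=\langle v,\phi\rangle_\Omega$ with $\phi=\nabla\psi$, $\psi$ in the domain of the Neumann Laplacian $\Delta_N$ (then $\nabla\psi\in W_T$); since $b_0(u,\nabla\psi)=\langle{\rm div}\,u,\Delta\psi\rangle_\Omega$ and $\langle v,\nabla\psi\rangle_\Omega=\langle\nabla\zeta,\nabla\psi\rangle_\Omega=-\langle\zeta,\Delta\psi\rangle_\Omega$ (the boundary term vanishes because $\partial_\nu\psi=0$), we get $\langle{\rm div}\,u+\zeta,\Delta\psi\rangle_\Omega=0$ for every $\psi\in D(\Delta_N)$. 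Because $\Omega$ is convex or ${\mathscr{C}}^{1,1}$, $\Delta_N$ is $H^2$-regular with range $\{g\in L^2:\int_\Omega g=0\}$, hence ${\rm div}\,u=-\zeta+{\rm const}\in H^1(\Omega)$. For $B_1$ one repeats the argument with $\zeta\in H^1_0(\Omega)$ and $\psi$ in the domain of the Dirichlet Laplacian $\Delta_D$ (then $\nabla\psi\in W_N$); now the boundary term vanishes because ${\rm Tr}_{|_{\partial\Omega}}\zeta=0$, and since $\Delta_D$ maps onto $L^2$ one gets ${\rm div}\,u=-\zeta\in H^1_0(\Omega)$, which simultaneously yields ${\rm div}\,u\in H^1$ and ${\rm div}\,u=0$ on $\partial\Omega$.

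Finally, \eqref{curlB} follows from the descriptions just established. Fix $u\in L^2(\Omega,{\mathbb{R}}^3)$ with ${\rm curl}\,u\in L^2$ and $\varepsilon>0$; since $B_0\ge0$ is self-adjoint, $w:=(1+\varepsilon B_0)^{-1}u\in D(B_0)$, so by \eqref{D(B0)} it solves $w-\varepsilon\nabla{\rm div}\,w+\varepsilon\,{\rm curl}\,{\rm curl}\,w=u$ and satisfies ${\rm div}\,w\in H^1$, $\nu\cdot w=0$, $\nu\times{\rm curl}\,w=0$. Applying ${\rm curl}$ kills the gradient term and yields ${\rm curl}\,w+\varepsilon\,{\rm curl}\,{\rm curl}\,({\rm curl}\,w)={\rm curl}\,u$ in $L^2$. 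Put $W={\rm curl}\,w$: then ${\rm div}\,W=0$ (hence in $H^1$ and vanishing on $\partial\Omega$), ${\rm curl}\,W={\rm curl}\,{\rm curl}\,w\in L^2$, ${\rm curl}\,{\rm curl}\,W=\varepsilon^{-1}({\rm curl}\,u-{\rm curl}\,w)\in L^2$, and $\nu\times W=\nu\times{\rm curl}\,w=0$; by \eqref{D(B1)} this means $W\in D(B_1)$ with $B_1W={\rm curl}\,{\rm curl}\,W$, so $(1+\varepsilon B_1)W={\rm curl}\,u$, that is, ${\rm curl}\,(1+\varepsilon B_0)^{-1}u=(1+\varepsilon B_1)^{-1}{\rm curl}\,u$. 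The main obstacle is the middle step: extracting $H^1$-regularity of ${\rm div}\,u$ from the purely variational definition of $B_0,B_1$, which is exactly where the hypothesis on $\Omega$ (through $W_{T,N}\hookrightarrow H^1$ and the $H^2$-regularity of the scalar Laplacian) and the weak trace calculus of Section~\ref{sec:def} are used in an essential way.
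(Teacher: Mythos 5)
Your argument for the commutator identity \eqref{curlB} is essentially the paper's: apply ${\rm curl}$ to the resolvent equation, check via \eqref{D(B0)} and \eqref{D(B1)} that ${\rm curl}\,(1+\varepsilon B_0)^{-1}u$ lands in $D(B_1)$ (the only nontrivial point being ${\rm curl}\,{\rm curl}\,{\rm curl}\,u_\varepsilon\in L^2$, which both you and the paper get from $-\Delta u_\varepsilon=\varepsilon^{-1}(u-u_\varepsilon)$), and then identify it with $(1+\varepsilon B_1)^{-1}{\rm curl}\,u$. Where you genuinely diverge is on the domain descriptions: the paper does not prove them but cites \cite[(3.17) \& (3.18)]{MM09a} and \cite[Theorems~7.1, 7.3]{Mi04}, whereas you give a self-contained variational proof. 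Your key step --- testing $b_0(u,\phi)=\langle B_0u,\phi\rangle_\Omega$ against $\phi=\nabla\psi$ with $\psi$ in the domain of the Neumann (resp.\ Dirichlet) Laplacian, combined with the Helmholtz splitting of $B_0u$ (resp.\ $B_1u$) adapted to the boundary condition, to extract ${\rm div}\,u\in H^1(\Omega)$ (resp.\ $H^1_0(\Omega)$) --- is sound: the range of $\Delta_N$ is the mean-zero subspace and $\Delta_D$ is onto $L^2$, so the orthogonality relations you derive do force ${\rm div}\,u=-\zeta+\mathrm{const}$ (note that $H^2$-regularity of the scalar Laplacian is not actually needed here, only variational solvability; the geometric hypothesis on $\Omega$ enters through $W_{T,N}\hookrightarrow H^1$, which you use to make sense of the boundary pairings). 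The remaining identifications (reading off $\nu\times{\rm curl}\,u=0$ from the surjectivity of the trace of $W_T$ onto tangential $H^{1/2}$ fields, and the surviving divergence boundary term for $B_1$) are standard and correctly placed. In short: what the paper buys by citation, you buy with a short direct argument; the trade-off is that your write-up must be careful with the weak trace pairings (e.g.\ justifying that vanishing of $\langle\nu\times{\rm curl}\,u,\cdot\rangle$ against all tangential $H^{1/2}$ traces suffices), which is exactly the kind of approximation the paper carries out elsewhere via $(1+\varepsilon B_0)^{-1}$.
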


\begin{proof}
The description of the domain of $B_0$ can be found in \cite[(3.17) \& (3.18)]{MM09a}. 
We can describe the domain of $B_1$ in the same way (see also 
\cite[Theorem~7.1 \& Theorem~7.3]{Mi04}). To prove
\eqref{curlB}, let $u\in L^2(\Omega,{\mathbb{R}}^3)$ such that 
${\rm curl}\,u\in L^2(\Omega,{\mathbb{R}}^3)$. Let $u_\varepsilon=(1+\varepsilon B_0)^{-1}u$
and $w_\varepsilon=(1+\varepsilon B_1)^{-1}{\rm curl}\,u$. 

\medskip

\noindent
{\tt Step~1}: {\em We claim that ${\rm curl}\,u_\varepsilon\in D(B_1)$.}

\noindent
By \eqref{D(B0)} we have ${\rm curl}\,u_\varepsilon\in L^2(\Omega,{\mathbb{R}}^3)$, 
${\rm curl}\,{\rm curl}\,u_\varepsilon\in L^2(\Omega,{\mathbb{R}}^3)$, 
${\rm div}\,({\rm curl}\,u_\varepsilon)=0\in H^1(\Omega)$, $\nu\times{\rm curl}\,u_\varepsilon=0$
on $\partial\Omega$ and ${\rm div}\,({\rm curl}\,u_\varepsilon)=0$ on $\partial\Omega$. To prove
that ${\rm curl}\,u_\varepsilon \in D(B_1)$, it remains to show, thanks to \eqref{D(B1)}, that 
${\rm curl}\,{\rm curl}\,({\rm curl}\,u_\varepsilon)\in L^2(\Omega,{\mathbb{R}}^3)$. This is due to
the fact that
$$
{\rm curl}\,{\rm curl}\,({\rm curl}\,u_\varepsilon)={\rm curl}\,(-\Delta u_\varepsilon)\quad
\mbox{in } H^{-1}(\Omega,{\mathbb{R}}^3).
$$
Since 
$$
-\Delta u_{\varepsilon}=B_0(1+\varepsilon B_0)^{-1}u=\frac{1}{\varepsilon}\,\bigl(u-u_\varepsilon\bigr)
$$
and ${\rm curl}\,u_\varepsilon, {\rm curl}\,u\in L^2(\Omega,{\mathbb{R}}^3)$, the claim follows.

\medskip

\noindent
{\tt Step~2}: {\em We claim now that ${\rm curl}\,u_\varepsilon=w_\varepsilon$.}

\noindent
By Step~1, we know that ${\rm curl}\,u_\varepsilon\in D(B_1)$. Moreover, we have in the sense 
of distributions
$$
(1+\varepsilon B_1)({\rm curl}\,u_\varepsilon)=
{\rm curl}\,u_\varepsilon-\varepsilon\Delta {\rm curl}\,u_\varepsilon
={\rm curl}\,\Bigl(u_\varepsilon-\varepsilon\Delta u_\varepsilon\Bigr)={\rm curl}\,u
$$
since $u_\varepsilon-\varepsilon\Delta u_\varepsilon=(1+\varepsilon B_0)(1+\varepsilon B_0)^{-1}u=u$.
Therefore, 
$$
{\rm curl}\,u_\varepsilon =(1+\varepsilon B_1)^{-1}{\rm curl}\,u=w_\varepsilon
$$
which proves the claim.
\end{proof}

The following lemma is inspired by \cite[Proof of Proposition~2.4 (iii)]{Mi04}.

\begin{lemma}
\label{extnutimesu}
\begin{enumerate}
\item
\label{1}
Let $g\in L^2(\partial\Omega,{\mathbb{R}}^3)$. 
Then there exists $w\in L^2(\Omega,{\mathbb{R}}^3)$ with 
${\rm curl}\,w\in L^2(\Omega,{\mathbb{R}}^3)$ such that for all $\phi\in W_T$
\begin{equation}
\label{extnutimesu1}
\langle g,\phi\rangle_{\partial\Omega}=
\langle {\rm curl}\,w,\phi\rangle_\Omega-\langle w,{\rm curl}\,\phi\rangle_\Omega.
\end{equation}
Moreover, there exists $C>0$ such that 
\begin{equation}
\label{estw}
\|w\|_{L^2(\Omega,{\mathbb{R}}^3)}+\|{\rm curl}\,w\|_{L^2(\Omega,{\mathbb{R}}^3)}
\le C\|g\|_{L^2(\partial\Omega,{\mathbb{R}}^3)}.
\end{equation}
\item
\label{2}
If  in addition  $g\in L^2_{\rm tan}(\partial\Omega,{\mathbb{R}}^3)$
(which means that $g \in L^2(\partial\Omega,{\mathbb{R}}^3)$ and
$\nu\cdot g=0$ on $\partial\Omega$), then there exists $w\in L^2(\Omega,{\mathbb{R}}^3)$
such that ${\rm curl}\,w\in L^2(\Omega,{\mathbb{R}}^3)$ and
\eqref{extnutimesu1} holds for all $\phi\in H^1(\Omega)$. And in that case
$g=\nu\times w$ in $H^{-1/2}(\partial\Omega,{\mathbb{R}}^3)$.
\end{enumerate}
\end{lemma}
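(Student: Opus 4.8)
The plan is to construct $w$ as the curl of a suitable vector potential and use a standard variational (Lax–Milgram) argument. First I would reduce to the case of a tangential datum: given $g\in L^2(\partial\Omega,{\mathbb R}^3)$, write $g=[g]_{\rm tan}+(\nu\cdot g)\nu$. The tangential part will be handled by part~\eqref{2} below; for the normal part, note that if $\phi\in W_T$ then $\nu\cdot\phi=0$ on $\partial\Omega$, so $\langle (\nu\cdot g)\nu,\phi\rangle_{\partial\Omega}=\langle\nu\cdot g,\nu\cdot\phi\rangle_{\partial\Omega}=0$; hence for part~\eqref{1} it suffices to realize $[g]_{\rm tan}$, and one may as well assume $g$ is tangential from the start. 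This is why part~\eqref{2} really contains the substance and part~\eqref{1} is a corollary (with the caveat that in \eqref{1} the test functions range only over $W_T$, whereas in \eqref{2} they range over all of $H^1(\Omega,{\mathbb R}^3)$, which forces $\nu\cdot g=0$).

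For a tangential $g\in L^2_{\rm tan}(\partial\Omega,{\mathbb R}^3)$, I would seek $w={\rm curl}\,z$ where $z$ solves a Hodge-type problem with a Neumann-type boundary condition. Concretely, consider the bilinear form $a(z,\psi)=\langle{\rm curl}\,z,{\rm curl}\,\psi\rangle_\Omega+\langle{\rm div}\,z,{\rm div}\,\psi\rangle_\Omega+\langle z,\psi\rangle_\Omega$ on $W_N$ (or $W_T$; one of the two adapts the natural boundary condition correctly). The functional $\psi\mapsto\langle g,\psi\rangle_{\partial\Omega}$ is bounded on $W_{T,N}$ because, by the embedding \eqref{WdansH1} and the continuity of the trace ${\rm Tr}_{|_{\partial\Omega}}:W_{T,N}\to L^2(\partial\Omega,{\mathbb R}^3)$ (equivalently Proposition~\ref{sobolev}), we have $|\langle g,\psi\rangle_{\partial\Omega}|\le\|g\|_{L^2(\partial\Omega)}\|{\rm Tr}\,\psi\|_{L^2(\partial\Omega)}\le C\|g\|_{L^2(\partial\Omega)}\|\psi\|_W$. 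Since $a$ is coercive on $W_{T,N}$ (the $W$-norm is exactly $\|z\|_2+\|{\rm div}\,z\|_2+\|{\rm curl}\,z\|_2$ up to the added $\|z\|_2^2$ term), Lax–Milgram gives a unique $z$ with $a(z,\psi)=\langle g,\psi\rangle_{\partial\Omega}$ for all $\psi$, and $\|z\|_W\le C\|g\|_{L^2(\partial\Omega)}$. Setting $w={\rm curl}\,z$ then yields $\|w\|_2+\|{\rm curl}\,w\|_2\le C\|z\|_W\le C\|g\|_{L^2(\partial\Omega)}$ — provided one also checks ${\rm curl}\,w={\rm curl}\,{\rm curl}\,z\in L^2$, which will come out of the variational identity by testing against smooth compactly supported fields.

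The delicate point — and the main obstacle — is extracting from the single variational identity $a(z,\psi)=\langle g,\psi\rangle_{\partial\Omega}$ the two separate facts we need: (a) the interior Euler–Lagrange equation (which should force ${\rm div}\,z$ and a combination involving ${\rm curl}\,{\rm curl}\,z$ to vanish or be controlled, so that $w={\rm curl}\,z$ has ${\rm curl}\,w\in L^2$), and (b) the correct identification of the boundary term. For (b) one integrates by parts: for $\psi\in H^1(\Omega,{\mathbb R}^3)$ smooth, $\langle{\rm curl}\,z,{\rm curl}\,\psi\rangle_\Omega=\langle{\rm curl}\,{\rm curl}\,z,\psi\rangle_\Omega+\langle\nu\times{\rm curl}\,z,\psi\rangle_{\partial\Omega}$ and similarly $\langle{\rm div}\,z,{\rm div}\,\psi\rangle_\Omega=-\langle\nabla{\rm div}\,z,\psi\rangle_\Omega+\langle({\rm div}\,z)\nu,\psi\rangle_{\partial\Omega}$; choosing $\psi$ first compactly supported kills the boundary integrals and identifies the interior equation, then general $\psi$ identifies $\langle(\nu\times{\rm curl}\,z)+({\rm div}\,z)\nu-\cdots,\psi\rangle_{\partial\Omega}=\langle g,\psi\rangle_{\partial\Omega}$. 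Since $g$ is tangential and $({\rm div}\,z)\nu$ is normal, matching tangential and normal parts forces ${\rm div}\,z=0$ on $\partial\Omega$ and $\nu\times{\rm curl}\,z=g$ in $H^{-1/2}$, i.e. $g=\nu\times w$. This matching — and checking that the boundary traces above make sense in $H^{-1/2}$ via the definitions \eqref{nudotu}–\eqref{nutimesu} rather than classically — is where care is needed; one leans on Proposition~\ref{curlB0=B1curl} and the domain descriptions \eqref{D(B0)}–\eqref{D(B1)} to guarantee $z$ lies in a domain on which these traces are well defined. Finally, for part~\eqref{1} one simply restricts the test space to $W_T$ and keeps only the weaker conclusion $g=[g]_{\rm tan}$ realized, dropping the requirement $\nu\cdot g=0$.
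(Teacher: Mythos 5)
Your reduction of part~\eqref{1} to the tangential case is fine, and the Lax--Milgram setup is coercive and well posed. But the construction $w={\rm curl}\,z$ has a genuine gap exactly at the point you flag as ``delicate'': testing the identity $a(z,\psi)=\langle g,\psi\rangle_{\partial\Omega}$ against compactly supported fields yields only the \emph{combination}
$${\rm curl}\,{\rm curl}\,z-\nabla\,{\rm div}\,z+z=0 \quad\mbox{in }{\mathscr{D}}'(\Omega),$$
i.e.\ $L^2$-control of ${\rm curl}\,{\rm curl}\,z-\nabla\,{\rm div}\,z$, not of ${\rm curl}\,{\rm curl}\,z$ alone. To conclude ${\rm curl}\,w\in L^2(\Omega,{\mathbb{R}}^3)$ you would need ${\rm div}\,z\in H^1(\Omega)$, and this fails for general $g\in L^2(\partial\Omega,{\mathbb{R}}^3)$: testing with $\psi=\nabla p$, $\partial_\nu p=0$, shows that $q={\rm div}\,z$ solves a Neumann problem whose boundary datum is essentially the tangential divergence of $g$, which for $g$ merely in $L^2(\partial\Omega,{\mathbb{R}}^3)$ lies only in $H^{-1}(\partial\Omega)$; this gives $q\in H^{1/2}(\Omega)$ at best. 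Your appeal to Proposition~\ref{curlB0=B1curl} and the descriptions \eqref{D(B0)}--\eqref{D(B1)} does not repair this, because those describe the domains of the operators with \emph{homogeneous} natural boundary conditions, whereas your $z$ satisfies $\nu\times{\rm curl}\,z=-g\neq0$ and ${\rm div}\,z\neq0$ on $\partial\Omega$, so $z\notin D(B_0)$. The subsequent boundary identification is then circular: defining $\nu\times{\rm curl}\,z$ via \eqref{nutimesu} already presupposes ${\rm curl}\,{\rm curl}\,z\in L^2(\Omega,{\mathbb{R}}^3)$, which is what you are trying to prove.

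The paper avoids elliptic regularity entirely. It views $\phi\mapsto\langle g,\phi\rangle_{\partial\Omega}$ as a bounded functional on the subspace $X=\{(\phi,{\rm curl}\,\phi):\phi\in W_T\}$ of $L^2(\Omega,{\mathbb{R}}^3)\times L^2(\Omega,{\mathbb{R}}^3)$ and extends it by Hahn--Banach, obtaining a representing pair $(v_1,v_2)$ that is in $L^2\times L^2$ \emph{by construction}; the interior constraint from $\phi\in H^1_0$ then forces $v_2=-{\rm curl}\,v_1$, so $w:=-v_1$ has ${\rm curl}\,w=v_2\in L^2(\Omega,{\mathbb{R}}^3)$ for free, with no need to identify $w$ as the curl of a potential or to split $-\Delta z$ into its Hodge components. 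Part~\eqref{2} is then deduced from part~\eqref{1} (the opposite direction from yours) by approximating $\nu\times g$ by $H^{1/2}$ fields and enlarging the test space from $W_T$ to $H^1(\Omega,{\mathbb{R}}^3)$ via the resolvents $(1+\varepsilon B_0)^{-1}$ and the commutation \eqref{curlB}. If you want to salvage a variational construction, you must either assume more regularity on $g$ (e.g.\ $g\in H^{1/2}(\partial\Omega,{\mathbb{R}}^3)$) or replace Lax--Milgram on $W_T$ by a representation argument on the product space as the paper does.
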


\begin{proof}
\ref{1}.
We define the space $X:= \{(\phi,{\rm curl}\,\phi); \phi \in W_T \}$. It is a closed subspace of 
$L^2(\Omega,{\mathbb{R}}^3)\times L^2(\Omega,{\mathbb{R}}^3)$.  
By classical trace theorems 
(\cite[Th\'eor\`eme~8.3]{LiMa68}, \cite[Ch.\,2, Th\'eor\`eme~5.5]{Necasf} or
\cite[Ch.\,2, Theorem~5.5 with $k=1$ and $p=2$]{Necas}), we have
that $\nu\times \phi\in L^2(\partial\Omega,{\mathbb{R}}^3)$ for all 
$\phi\in W_T \subset H^1(\Omega,{\mathbb{R}}^3)$. 
Since $g\in L^2(\partial\Omega,{\mathbb{R}}^3)$, it is immediate that
$\nu\times g\in L^2(\partial\Omega,{\mathbb{R}}^3)=\bigl(L^2(\partial\Omega,{\mathbb{R}}^3)\bigr)'$.  
Thus, $\nu\times g$ acts  as a linear functional on $X$ as follows:
$$
(\nu\times g)(\phi, {\rm curl}\,\phi):=\langle \nu\times g, \nu\times\phi\rangle_{\partial\Omega}
\quad \mbox{for all }\phi\in W_T.
$$
By the Hahn-Banach theorem, there exist 
$(v_1,v_2)\in L^2(\Omega,{\mathbb{R}}^3)\times L^2(\Omega,{\mathbb{R}}^3)$ such that
$$
(\nu\times g)(\phi, {\rm curl}\,\phi)=\langle v_1,{\rm curl}\,\phi\rangle_\Omega + 
\langle v_2,\phi\rangle_\Omega \quad \mbox{for all }\phi\in W_T,
$$
where we have identified $\bigl(L^2(\Omega,{\mathbb{R}}^3)\times L^2(\Omega,{\mathbb{R}}^3)\bigr)'$
with $L^2(\Omega,{\mathbb{R}}^3)\times L^2(\Omega,{\mathbb{R}}^3)$.
We can choose $\phi\in H^1_0(\Omega,{\mathbb{R}}^3) \subset W_T$
and obtain that 
$$
0 = \ _{H^{-1}}\langle {\rm curl}\, v_1 + v_2,\phi\rangle_{H^1_0}.
$$
This gives that ${\rm curl}\, v_1 + v_2 = 0$ in $H^{-1}(\Omega,{\mathbb{R}}^3)$. 
We set $w:=-v_1\in L^2(\Omega,{\mathbb{R}}^3)$, we have 
${\rm curl}\,w=v_2\in L^2(\Omega,{\mathbb{R}}^3)$ and
\begin{equation}
\label{PWR1}
\langle \nu\times g, \nu\times\phi\rangle_{\partial\Omega}=
-\langle w,{\rm curl}\,\phi\rangle_\Omega + \langle {\rm curl}\,w,\phi\rangle_\Omega
\quad \mbox{for all }\phi\in W_T.
\end{equation}
Since $\phi\in W_T$, ${\rm Tr}_{|_{\partial\Omega}}\phi\in L^2_{\rm tan}(\partial\Omega,{\mathbb{R}}^3)$ 
it is clear\footnote{Recall that for $a,b,c\in{\mathbb{R}}^3$, the following identities hold:
$$
(a\times b)\cdot c=(b\times c)\cdot a, \quad a\times b=-b\times a,\quad
|a|^2b=(a\times b)\times a +(a\cdot b)a.
$$}
that $\phi=(\nu\times \phi)\times \nu$, so that the left-hand side of \eqref{PWR1} coincides with 
\begin{equation}
\label{PWR2}
\langle g, \phi\rangle_{\partial\Omega} \quad\mbox{ for all } \phi\in W_T,
\end{equation}
which proves \eqref{extnutimesu1}.

The existence of $C>0$ such that \eqref{estw} holds follows from the Closed Graph Theorem since 
$\{u\in L^2(\Omega,{\mathbb{R}}^3); {\rm curl}\,u\in L^2(\Omega,{\mathbb{R}}^3)\}$ is complete 
for the norm $\|u\|_2 + \|{\rm curl}\,u\|_2$. 

\vspace{.2cm}
\noindent
\ref{2}.
Assume now that $g\in L^2_{\rm tan}(\partial\Omega,{\mathbb{R}}^3)$. Let 
$w\in L^2(\Omega,{\mathbb{R}}^3)$ such that ${\rm curl}\,w\in L^2(\Omega,{\mathbb{R}}^3)$ and
\eqref{extnutimesu1} holds. Since 
$\nu\times g\in L^2(\partial\Omega,{\mathbb{R}}^3)$, we can approach it 
in $L^2(\partial\Omega,{\mathbb{R}}^3)$ by a sequence $(\varphi_n)_{n\in{\mathbb{N}}}$ 
of vector fields $\varphi_n\in H^{1/2}(\partial\Omega,{\mathbb{R}}^3)$. In particular, 
$$
\varphi_n\times \nu\longrightarrow (\nu\times g)\times\nu=g
\quad \mbox{in }L^2(\partial\Omega,{\mathbb{R}}^3)\mbox{ as }n\to\infty.
$$
By assertion \ref{1}, for each $n\in {\mathbb{N}}$ there exists $w_n\in L^2(\Omega,{\mathbb{R}}^3)$ 
such that ${\rm curl}\,w_n\in L^2(\Omega,{\mathbb{R}}^3)$ satisfying
$$
\langle \varphi_n\times\nu, \phi\rangle_{\partial\Omega}= 
\langle {\rm curl}\,w_n,\phi\rangle_\Omega -\langle w_n,{\rm curl}\,\phi\rangle_\Omega 
\quad \mbox{for all }\phi\in W_T.
$$
Thanks to the estimate \eqref{estw}, it is immediate that
$$
w_n\longrightarrow w\quad \mbox{ and }\quad {\rm curl}\,w_n\longrightarrow {\rm curl}\,w
\quad\mbox{in }L^2(\Omega,{\mathbb{R}}^3)\mbox{ as }n\to \infty.
$$
Let now $\phi\in H^1(\Omega,{\mathbb{R}}^3)$. For  $\varepsilon>0$, let 
$\phi_\varepsilon=(1+\varepsilon B_0)^{-1}\phi$ with $B_0$ as in Proposition~\ref{curlB0=B1curl}.
Then $\phi_\varepsilon\in W_T$ and thanks to \eqref{curlB}
$$
\phi_\varepsilon\longrightarrow \phi\quad \mbox{ and }\quad 
{\rm curl}\,\phi_\varepsilon=(1+\varepsilon B_1)^{-1}{\rm curl}\,\phi\longrightarrow {\rm curl}\,\phi
\quad\mbox{in }L^2(\Omega,{\mathbb{R}}^3)\mbox{ as }\varepsilon\to 0.
$$
This implies also that 
$$
\nu\times\phi_\varepsilon\longrightarrow \nu\times\phi
\quad\mbox{in }H^{-1/2}(\partial\Omega,{\mathbb{R}}^3)\mbox{ as }\varepsilon\to 0.
$$
Therefore, we have for all $\varepsilon>0$ and $n\in{\mathbb{N}}$
$$
\langle \nu\times \phi_\varepsilon,\varphi_n\rangle_{\partial\Omega}=
\langle \varphi_n\times \nu, \phi_\varepsilon\rangle_{\partial\Omega}=
 \langle {\rm curl}\,w_n,\phi_\varepsilon\rangle_\Omega 
 -\langle w_n,{\rm curl}\,\phi_\varepsilon\rangle_\Omega. 
$$
We first take the limit as $\varepsilon$ goes to $0$ and  obtain (recall that 
$\varphi_n\in H^{1/2}(\partial\Omega,{\mathbb{R}}^3)$)
$$
\ _{H^{-1/2}}\langle \nu\times \phi,\varphi_n\rangle_{H^{1/2}}
= \langle {\rm curl}\,w_n,\phi\rangle_\Omega 
 -\langle w_n,{\rm curl}\,\phi\rangle_\Omega. 
$$
Since $\phi\in H^1(\Omega,{\mathbb{R}}^3)$, the first term of the latter equation is also equal to
$\langle \varphi_n\times\nu,  \phi\rangle_{\partial\Omega}$. Taking the limit as $n$ goes 
to $\infty$ yields
$$
\langle g,\phi\rangle_{\partial\Omega} =\langle {\rm curl}\,w,\phi\rangle_\Omega 
 -\langle w,{\rm curl}\,\phi\rangle_\Omega
$$
which proves the claim made in \ref{2}.
\end{proof}

\begin{lemma}
\label{CMcI}
Let $\varphi\in H^{1/2}(\partial\Omega,{\mathbb{R}}^3)\cap 
L^2_{\rm tan}(\partial\Omega,{\mathbb{R}}^3)$. Then there exists
$v\in H^1(\Omega,{\mathbb{R}}^3)$ such that ${\rm div}\,v=0$ on $\Omega$ and
$v_{|_{\partial\Omega}}=\varphi$. 
\end{lemma}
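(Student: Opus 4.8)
The plan is to reduce the statement to the classical solvability of the divergence equation with homogeneous Dirichlet data. First I would extend $\varphi$ off the boundary: since the trace operator ${\rm Tr}_{|_{\partial\Omega}} : H^1(\Omega,{\mathbb{R}}^3)\to H^{1/2}(\partial\Omega,{\mathbb{R}}^3)$ is onto and admits a bounded linear right inverse, pick $V\in H^1(\Omega,{\mathbb{R}}^3)$ with ${\rm Tr}_{|_{\partial\Omega}} V=\varphi$. Then ${\rm div}\,V\in L^2(\Omega)$, and by the divergence theorem (Gauss--Green formula) together with the hypothesis that $\varphi$ is tangential, i.e. $\nu\cdot\varphi=0$ on $\partial\Omega$,
$$
\int_\Omega {\rm div}\,V\,{\rm d}x=\int_{\partial\Omega}\nu\cdot V\,{\rm d}\sigma
=\int_{\partial\Omega}\nu\cdot\varphi\,{\rm d}\sigma=0 .
$$

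Next I would correct $V$ so as to kill its divergence. Here one invokes the classical Bogovskii (de Rham) result: on a bounded Lipschitz domain, in particular on the ${\mathscr{C}}^{1,1}$ or convex domain $\Omega$, for every $f\in L^2(\Omega)$ with $\int_\Omega f\,{\rm d}x=0$ there exists $w\in H^1_0(\Omega,{\mathbb{R}}^3)$ with ${\rm div}\,w=f$ and $\|w\|_{H^1(\Omega)}\le C\,\|f\|_{L^2(\Omega)}$. Applying this with $f:={\rm div}\,V$, whose mean vanishes by the previous step, produces such a $w$.

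Finally, set $v:=V-w\in H^1(\Omega,{\mathbb{R}}^3)$. Since $w\in H^1_0(\Omega,{\mathbb{R}}^3)$ one has ${\rm Tr}_{|_{\partial\Omega}} w=0$, hence $v_{|_{\partial\Omega}}={\rm Tr}_{|_{\partial\Omega}} V=\varphi$, while ${\rm div}\,v={\rm div}\,V-{\rm div}\,w=0$ in $\Omega$. This is the desired vector field.

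The only genuinely nontrivial input is the surjectivity of the divergence onto mean-zero $L^2$ functions with an $H^1_0$ right inverse, i.e. Bogovskii's theorem on Lipschitz domains; everything else is the standard surjectivity of the trace and the divergence theorem. So the ``main obstacle'' is really just having that divergence-solvability result available for the class of domains considered --- which is classical --- and, more modestly, checking the compatibility condition $\int_\Omega{\rm div}\,V=0$, which is precisely the point where the tangency hypothesis on $\varphi$ is used.
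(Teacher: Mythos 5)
Your proof is correct, and it takes a genuinely different route from the paper's. The paper extends $\varphi$ to some $w\in H^1(\Omega,{\mathbb{R}}^3)$ exactly as you do, but then invokes the Costabel--McIntosh decomposition \cite[Theorem~4.6]{CMcI10}, which for tangential $H^1$ fields provides operators $T,S,R$ into $H^1_0(\Omega,{\mathbb{R}}^3)$ with ${\rm curl}\,Tw+S\,{\rm div}\,w=w-Rw$; the candidate is then $v:={\rm curl}\,Tw$, which is divergence-free by construction and has trace $\varphi$ because the correction terms $S\,{\rm div}\,w$ and $Rw$ vanish on the boundary. You instead correct the divergence directly: the tangency of $\varphi$ gives the compatibility condition $\int_\Omega{\rm div}\,V=0$ via the Gauss--Green formula, and the classical Bogovski\u{\i} solvability of ${\rm div}\,w=f$ in $H^1_0(\Omega,{\mathbb{R}}^3)$ for mean-zero $f\in L^2(\Omega)$ (valid on bounded Lipschitz domains, hence on the domains considered here) removes the divergence without disturbing the trace. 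The two arguments are close cousins --- the reference the paper uses is itself a paper about Bogovski\u{\i}-type operators --- but yours is the more elementary and self-contained version, resting only on the standard divergence-equation result, while the paper's version packages the divergence-free extension as the curl of a potential, which is slightly more information than the lemma actually asserts. In both proofs the tangency hypothesis is used exactly once and essentially: in yours for the mean-zero compatibility, in the paper's to place $w$ in the class where \cite[Theorem~4.6]{CMcI10} applies.
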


\begin{proof} 
Let $\varphi\in H^{1/2}(\partial\Omega,{\mathbb{R}}^3)\cap 
L^2_{\rm tan}(\partial\Omega,{\mathbb{R}}^3)$. Since the trace operator 
${\rm Tr}_{|_{\partial\Omega}}:H^1(\Omega,{\mathbb{R}}^3) \to H^{1/2}(\partial\Omega,{\mathbb{R}}^3)$ 
is onto there exists $w\in H^1(\Omega,{\mathbb{R}}^3)$ 
such that $w_{|_{\partial\Omega}}=\varphi$. By \cite[Theorem~4.6]{CMcI10}, there exist
three operators $R:L^2(\Omega,{\mathbb{R}}^3)\to H^1_0(\Omega,{\mathbb{R}}^3)$, 
$S:L^2(\Omega)\to H^1_0(\Omega,{\mathbb{R}}^3)$ and 
$T:L^2(\Omega,{\mathbb{R}}^3)\to H^1_0(\Omega,{\mathbb{R}}^3)$ such that
$$
{\rm curl}\,Tu+S{\rm div}\,u=u-Ru \quad\mbox{for all }
u\in H^1(\Omega,{\mathbb{R}}^3)\mbox{ with }\nu\cdot u=0\mbox{ on }\partial\Omega
$$
(choose $n=3$, $T=T_2$, $S=T_3$ and $R=L_2$ in \cite[Theorem~4.6]{CMcI10}). We apply
this result to $u=w$ and we define
$$
v:={\rm curl}\,Tw=w-S{\rm div}\,w-Rw;
$$ 
$v$ satisfies
${\rm div}\,v=0$, $v\in H^1(\Omega,{\mathbb{R}}^3)$ and 
$v_{|_{\partial\Omega}}=w_{|_{\partial\Omega}}=\varphi$.
\end{proof}

The classical Hodge-Helmholtz decomposition asserts that the space 
$L^2(\Omega,{\mathbb{R}}^3)$ is the orthogonal direct sum 
$H\stackrel{\bot}{\oplus} G$ where
\begin{equation}
\label{def:H}
H :=\bigl\{u\in L^2(\Omega,{\mathbb{R}}^3); {\rm div}\,u=0\mbox{ in } \Omega, 
\nu\cdot u=0\mbox{ on } \partial\Omega\bigr\}
\end{equation}
and $G :=\nabla H^1(\Omega,{\mathbb{R}})$. 

\begin{remark} 
\label{densityH}
The space $H$ coincides with the closure in $L^2(\Omega,{\mathbb{R}}^3)$ of the space of
vector fields $u\in {\mathscr{C}}_c^\infty(\Omega,{\mathbb{R}}^3)$ with ${\rm div}\,u=0$ in 
$\Omega$ which we denote by ${\mathscr{D}}(\Omega)$. See, e.g., \cite[Theorem~1.4]{Temam}.
\end{remark}

We denote by $J:H\hookrightarrow L^2(\Omega;{\mathbb{R}}^3)$ the canonical embedding
and ${\mathbb{P}}:L^2(\Omega;{\mathbb{R}}^3)\to H$ the orthogonal projection. Recall that
for $u\in L^2(\Omega,{\mathbb{R}}^3)$, there exists $p\in H^1(\Omega)$ so that
${\mathbb{P}}u=u-\nabla p$. It is clear that ${\mathbb{P}} J={\rm Id}_{H}$ and that 
\begin{equation}
\label{Pcom}
\langle u,{\mathbb{P}}v\rangle_\Omega=\langle {\mathbb{P}}u, v\rangle_\Omega
\quad\mbox{for all }u,v\in L^2(\Omega;{\mathbb{R}}^3).
\end{equation}
Define now the space $V:=W_T\cap H$. Thus, for every $v\in W_T$, ${\mathbb{P}}v\in V$. 
The space $V$ will be used to define the Stokes operator 
with Robin boundary conditions in the next section. 


\section{The Robin-Stokes operator}
\label{secRS}

In this  section we define the Stokes operator with Robin boundary conditions on $\partial\Omega$. 
In order to do this we use the method of sesquilinear forms. 
We start by defining the Hodge-Laplacian with Robin boundary conditions. As in the previous 
section, $\Omega$ is a bounded domain of ${\mathbb{R}}^3$ and we suppose that it is either 
convex or has a ${\mathscr{C}}^{1,1}$-boundary. 

\medskip

Fix $\tau \in (0, \infty)$ and let $\beta:[0, \tau]\times\partial\Omega\to{\mathscr{M}}_3({\mathbb{R}})$ 
be bounded measurable on $[0, \tau] \times\partial\Omega$ such that 
\begin{align}
&0 \le\beta(t,x)\xi\cdot\xi\le M |\xi|^2
\mbox{ for almost all }(t,x)\in[0, \tau]\times\partial\Omega
\label{TES1}\\
&\quad\qquad\qquad\qquad\qquad\qquad\mbox{ and all }\xi\in{\mathbb{R}}^3
\nonumber\\[4pt]
&\beta(t,x)\mbox{ is symmetric for almost all }(t,x)\in[0, \tau]\times\partial\Omega, 
\label{TES2}\\[4pt]
&\beta(t,x)\nu(x)=\lambda(t,x)\nu(x)\mbox{ for almost all }x\in\partial\Omega, t>0, 
\label{TES3}
\end{align}
where $\lambda:[0, \tau]\times\partial\Omega\to{\mathbb{R}}$, so that a normal vector field
transformed by $\beta=\beta^\top$ remains normal at the boundary. 

Recall that $V=W_T\cap H$ and that the embedding $J$ restricted to $V$ maps $V$ to $W_T$.  
We denote this restriction  by $J_0: V \hookrightarrow W_T$. Its adjoint 
$J_0' =: {\mathbb{P}}_1:W_T'\to V'$ is then an extension of the orthogonal
projection ${\mathbb{P}}$. 

\begin{lemma}
\label{PV}
The projection ${\mathbb{P}}$ restricted to $W_T$ takes its values in $V$, so that 
${\mathbb{P}} J_0={\rm Id}_{V}$ holds.
\end{lemma}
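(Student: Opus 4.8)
The claim is that for $v \in W_T$, the Hodge–Helmholtz projection $\mathbb{P}v$ again lies in $W_T$ (hence in $V = W_T \cap H$), and that consequently $\mathbb{P}J_0 = \mathrm{Id}_V$. The plan is to write $\mathbb{P}v = v - \nabla p$ for a suitable $p \in H^1(\Omega)$, as recalled just before the lemma, and to check directly that $v - \nabla p$ satisfies the three defining conditions of $W_T$ from \eqref{defWT}: square-integrability, $\mathrm{div}\,(v-\nabla p) \in L^2(\Omega)$, $\mathrm{curl}\,(v-\nabla p) \in L^2(\Omega,{\mathbb{R}}^3)$, and $\nu\cdot(v-\nabla p) = 0$ on $\partial\Omega$. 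The last three are where the work is.

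First I would note $\mathrm{curl}\,(v - \nabla p) = \mathrm{curl}\,v \in L^2(\Omega,{\mathbb{R}}^3)$ since $\mathrm{curl}\,\nabla p = 0$, so that condition is immediate from $v \in W_T$. For the divergence: since $\mathbb{P}v \in H$ we have $\mathrm{div}\,\mathbb{P}v = 0 \in L^2(\Omega)$ by the definition \eqref{def:H} of $H$, so $\mathrm{div}\,(v-\nabla p) = 0$ trivially. Likewise the normal-component condition $\nu\cdot(v-\nabla p) = 0$ on $\partial\Omega$ is built into membership in $H$ (interpreted weakly in $H^{-1/2}(\partial\Omega)$ via \eqref{nudotu}, which is legitimate because $\mathrm{div}\,\mathbb{P}v \in L^2(\Omega)$). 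Thus $\mathbb{P}v \in W_T$, and since $\mathbb{P}v \in H$ already, $\mathbb{P}v \in V$. This shows $\mathbb{P}$ maps $W_T$ into $V$; composing with $J_0 : V \hookrightarrow W_T$ and using $\mathbb{P}J = \mathrm{Id}_H$ (recalled in the text) restricted to $V \subset H$ gives $\mathbb{P}J_0 = \mathrm{Id}_V$.

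The only genuine subtlety — and the step I expect to be the main obstacle to phrase cleanly — is making sure all three conditions for $\mathbb{P}v$ are read off correctly: they are in fact \emph{all} consequences of $\mathbb{P}v \in H$, except for the $\mathrm{curl}$ condition, which requires the specific structure $\mathbb{P}v = v - \nabla p$ with the gradient being curl-free. One should be careful that $\nabla p \in L^2(\Omega,{\mathbb{R}}^3)$ (true since $p \in H^1(\Omega)$), so that $\mathrm{curl}\,\nabla p = 0$ holds in the distributional sense and the subtraction is legitimate in $L^2$. No regularity theory or the $\mathscr{C}^{1,1}$/convexity hypothesis is needed here; it is purely a bookkeeping argument using the definitions of $H$, $W_T$, and $\mathbb{P}$.
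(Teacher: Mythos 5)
Your argument is correct and is essentially the paper's own proof: both write $\mathbb{P}v = v - \nabla p$, use $\mathrm{curl}\,\nabla p = 0$ to get $\mathrm{curl}\,\mathbb{P}v = \mathrm{curl}\,v \in L^2$, and read off the divergence and normal-trace conditions from membership in $H$ (the paper phrases this via the Neumann problem $\Delta p = \mathrm{div}\,v$, $\partial_\nu p = 0$, but the content is the same). No gap.
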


\begin{proof}
Let $w\in W_T$. Since $W_T\subset L^2(\Omega,{\mathbb{R}}^3)$, there exists 
$\pi\in H^1(\Omega)$ such that $w=J{\mathbb{P}}w +\nabla\pi$ and  $\pi$ satisfies 
$\Delta\pi ={\rm div}\,w\in L^2(\Omega)$ and $\partial_\nu\pi=\nu\cdot w=0$ on $\partial\Omega$. 
Moreover, ${\rm curl}\,\nabla\pi=0$ in $\Omega$, so that $\nabla\pi\in W_T$. Therefore, 
${\rm div}\,J{\mathbb{P}}w=0$ in $\Omega$, 
${\rm curl}\,J{\mathbb{P}}w = {\rm curl}\,w\in L^2(\Omega,{\mathbb{R}}^3)$ and 
$\nu\cdot J{\mathbb{P}}w=0$ on $\partial\Omega$, which proves that ${\mathbb{P}}w\in V$.
\end{proof}

We are now in the situation to define the Stokes operator with Robin boundary conditions.
We consider on the Hilbert space $H$ the bilinear symmetric form 
\begin{equation}
\label{formRobinStokes}
\begin{array}{rcl}
\fra_\beta&:&V\times V \ \longrightarrow\ {\mathbb{R}}
\\[4pt]
\fra_\beta(u,v)&:=&
\langle {\rm curl}\,J_0u,{\rm curl}\,J_0v\rangle_{\Omega} +
\langle \beta\, {\rm Tr}_{|_{\partial\Omega}} J_0u, 
{\rm Tr}_{|_{\partial\Omega}} J_0v \rangle_{\partial\Omega}.
\end{array}
\end{equation}
Using the fact that $\mathbb{P}J_0 = {\rm Id}_V$ we see that the  form $\fra_\beta$ is closed. 
Therefore, there exists an operator $A_{\beta,0}: V \to V'$ associated with $\fra_\beta$ in the 
sense that 
$$
\fra_\beta(u,v) =\  _{V'}\langle A_{\beta,0}u, v \rangle_V, \quad u, v \in V.
$$
The part $A_\beta$ of $A_{\beta,0}$ on $H$, i.e.,
$$
D(A_\beta) := \{ u \in V, \exists\,v \in H: \fra_\beta(u,\phi) = 
\langle v, \phi \rangle_\Omega \ \forall\,\phi \in V \},\quad  A_\beta u := v
$$
is a self-adjoint operator on $H$. We call $A_\beta$ the Robin-Stokes operator. 

From now on, since $J$ and $J_0$ are embedding operators, we will omit to write them
to avoid too pedantic an exposition.

\begin{theorem}
\label{RSOp}
The operator $A_\beta$ is given by
\begin{align}
D(A_\beta)=&\bigl\{u\in V; {\rm curl}\,{\rm curl}\,u\in L^2(\Omega,{\mathbb{R}}^3), 
\nu\times{\rm curl}\,u=\beta u\mbox{ on }\partial\Omega\bigr\},
\label{RSOp1}\\[4pt]
A_\beta u=& {\mathbb{P}}({\rm curl}\,{\rm curl}\,u) = -\Delta u+\nabla p, 
\qquad u\in D(A_\beta), 
\nonumber
\end{align}
for some $p\in H^1(\Omega)$.

In addition, $-A_\beta$ generates an analytic semigroup of 
contractions on $H$ and $D(A_\beta^{\frac{1}{2}})=V$. 
\end{theorem}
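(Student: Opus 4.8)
The plan is to identify the domain and action of $A_\beta$ by unravelling the defining relation $\fra_\beta(u,\phi)=\langle v,\phi\rangle_\Omega$ for all $\phi\in V$, then to extend the test functions from $V$ to all of $W_T$ using the projection $\mathbb P$, and finally to read off the boundary condition via the weak definition \eqref{nutimesu} of $\nu\times(\cdot)$ together with Lemma~\ref{extnutimesu}. The generation and square-root statements will then follow from abstract form theory once we know $\fra_\beta$ is densely defined, symmetric, nonnegative, closed, and continuous on $V$.

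First I would take $u\in D(A_\beta)$ with $A_\beta u=v\in H$. For every $\phi\in V$ we have $\langle {\rm curl}\,u,{\rm curl}\,\phi\rangle_\Omega+\langle\beta\,{\rm Tr}\,u,{\rm Tr}\,\phi\rangle_{\partial\Omega}=\langle v,\phi\rangle_\Omega$. Testing with $\phi\in\mathscr D(\Omega)$ (which lies in $V$, is dense in $H$, and has zero trace) gives ${\rm curl}\,{\rm curl}\,u=v$ in the sense of distributions, hence ${\rm curl}\,{\rm curl}\,u\in L^2(\Omega,\mathbb R^3)$; in particular the first half of \eqref{RSOp1} holds and, since $v\in H$, there is $p\in H^1(\Omega)$ with ${\rm curl}\,{\rm curl}\,u=v+\nabla p$, i.e. $v=\mathbb P({\rm curl}\,{\rm curl}\,u)$; using ${\rm curl}\,{\rm curl}= -\Delta+\nabla\,{\rm div}$ and ${\rm div}\,u=0$ this is $-\Delta u+\nabla p$. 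Next I would promote the test space: for arbitrary $w\in W_T$, write $w=\mathbb Pw+\nabla\pi$ with $\mathbb Pw\in V$ (Lemma~\ref{PV}) and $\nabla\pi\in W_T$; since ${\rm curl}\,\nabla\pi=0$ and, using \eqref{Pcom}, $\langle v,\nabla\pi\rangle_\Omega=\langle\mathbb Pv,\nabla\pi\rangle_\Omega=0$ because $v\in H$, while the trace of $\nabla\pi$ is normal and $\beta\,{\rm Tr}\,u$ is tangential by \eqref{TES3} (so $\langle\beta\,{\rm Tr}\,u,{\rm Tr}\,\nabla\pi\rangle_{\partial\Omega}=0$), one gets that the identity $\langle{\rm curl}\,u,{\rm curl}\,w\rangle_\Omega+\langle\beta\,{\rm Tr}\,u,{\rm Tr}\,w\rangle_{\partial\Omega}=\langle v,w\rangle_\Omega$ actually holds for all $w\in W_T$. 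Comparing with $\langle v,w\rangle_\Omega=\langle{\rm curl}\,{\rm curl}\,u,w\rangle_\Omega$ and the weak Green formula \eqref{nutimesu} applied to ${\rm curl}\,u$ (noting ${\rm curl}\,u\in L^2$, ${\rm curl}\,{\rm curl}\,u\in L^2$) yields
$$
_{H^{-1/2}}\langle\nu\times{\rm curl}\,u,{\rm Tr}\,w\rangle_{H^{1/2}}=\langle\beta\,{\rm Tr}\,u,{\rm Tr}\,w\rangle_{\partial\Omega}\quad\text{for all }w\in W_T .
$$
Since ${\rm Tr}\,(W_T)$ exhausts $H^{1/2}(\partial\Omega,\mathbb R^3)\cap L^2_{\rm tan}$ (Lemma~\ref{CMcI}), and $\beta\,{\rm Tr}\,u\in L^2(\partial\Omega,\mathbb R^3)$, this forces $\nu\times{\rm curl}\,u=\beta u$ in $H^{-1/2}(\partial\Omega,\mathbb R^3)$, giving one inclusion. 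The reverse inclusion runs the same computation backwards: given $u\in V$ with ${\rm curl}\,{\rm curl}\,u\in L^2$ and $\nu\times{\rm curl}\,u=\beta u$, the weak Green formula reproduces $\fra_\beta(u,\phi)=\langle\mathbb P({\rm curl}\,{\rm curl}\,u),\phi\rangle_\Omega$ for all $\phi\in V$, so $u\in D(A_\beta)$.

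For the last paragraph: $V$ is dense in $H$ (it contains $\mathscr D(\Omega)$, dense in $H$ by Remark~\ref{densityH}); $\fra_\beta$ is symmetric and nonnegative by \eqref{TES1}–\eqref{TES2}; it is $V$-continuous because ${\rm curl}$ and ${\rm Tr}\circ J_0$ are bounded on $W_T$ (by \eqref{WdansH1} and the trace theorem) and $\|\mathbb P w\|_{W_T}\lesssim\|w\|_{W_T}$ gives a norm on $V$ equivalent to the graph norm; and $\fra_\beta$ is closed since $V$ with this norm is complete (using $\mathbb P J_0={\rm Id}_V$). By the standard theory of symmetric nonnegative closed forms, $-A_\beta$ generates an analytic semigroup of contractions on $H$ and $D(A_\beta^{1/2})$ coincides with the form domain $V$. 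The main obstacle is the boundary-condition step — correctly handling $\nu\times{\rm curl}\,u$ only in $H^{-1/2}$, justifying the enlargement of test functions from $V$ to $W_T$ via the pressure decomposition and the compatibility \eqref{TES3}, and invoking Lemma~\ref{CMcI} to see that tangential $H^{1/2}$ data are attained as traces from $W_T$ so that the identity in $H^{-1/2}$ upgrades to the pointwise-a.e. Robin condition $\nu\times{\rm curl}\,u=\beta u$.
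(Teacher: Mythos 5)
Your architecture (unravel the form identity, enlarge the test space from $V$, read off the boundary condition, then quote symmetric-form theory) mirrors the paper's, and both the easy inclusion $D_\beta\subset D(A_\beta)$ and the final paragraph on the semigroup and $D(A_\beta^{1/2})=V$ are correct. But the hard inclusion contains two genuine errors. First, testing $\fra_\beta(u,\phi)=\langle v,\phi\rangle_\Omega$ against $\phi\in\mathscr{D}(\Omega)$ only shows that ${\rm curl}\,{\rm curl}\,u-v$ annihilates divergence-free test functions, i.e.\ ${\rm curl}\,{\rm curl}\,u=v+\nabla p$ with $\nabla p$ a priori only in $H^{-1}$; this does \emph{not} give ${\rm curl}\,{\rm curl}\,u\in L^2$. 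Second, your enlargement of the test space to $W_T$ rests on the claim that ${\rm Tr}_{|_{\partial\Omega}}\nabla\pi$ is normal; in fact $\nabla\pi$ belongs to $W_T$ precisely because $\partial_\nu\pi=\nu\cdot w=0$ (see Lemma~\ref{PV}), so its trace is \emph{tangential} and $\langle\beta\,{\rm Tr}\,u,{\rm Tr}\,\nabla\pi\rangle_{\partial\Omega}$ has no reason to vanish. The identity you assert for all $w\in W_T$ is actually false: the correct extension is the paper's \eqref{PAbeta3}, $\langle A_\beta u-{\mathbb{P}}\,{\rm curl}\,w,v\rangle_\Omega=\langle{\rm curl}\,u-w,{\rm curl}\,v\rangle_\Omega$, where $w$ is the auxiliary field of Lemma~\ref{extnutimesu} representing the boundary functional as a volume pairing on all of $W_T$. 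Testing that identity against all of $H^1_0(\Omega,{\mathbb{R}}^3)\subset W_T$ is what yields ${\rm curl}\,{\rm curl}\,u\in L^2$, and one finds ${\rm curl}\,{\rm curl}\,u=A_\beta u+(I-{\mathbb{P}})\,{\rm curl}\,w$ with a genuinely nonzero gradient part --- which also shows your exact identity ${\rm curl}\,{\rm curl}\,u=v$ cannot hold in general. So Lemma~\ref{extnutimesu} is not a side remark here; it is the mechanism that replaces your $\mathbb{P}w+\nabla\pi$ splitting.

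There is a third, unproved, assertion at the end: knowing $\ _{H^{-1/2}}\langle\beta u-\nu\times{\rm curl}\,u,\varphi\rangle_{H^{1/2}}=0$ for all \emph{tangential} $\varphi\in H^{1/2}$ (via Lemma~\ref{CMcI}) only kills the tangential part of this $H^{-1/2}$ distribution; to conclude it vanishes as an element of $H^{-1/2}(\partial\Omega,{\mathbb{R}}^3)$ one must also handle non-tangential test traces. The paper does this by writing $\beta u=\nu\times w$ (Lemma~\ref{extnutimesu}, part 2), setting $w_1=w-{\rm curl}\,u$, and extending the vanishing of $\langle{\rm curl}\,w_1,v\rangle_\Omega-\langle w_1,{\rm curl}\,v\rangle_\Omega$ from $v\in W_T$ to all $v\in H^1(\Omega,{\mathbb{R}}^3)$ via the resolvent approximation $(1+\varepsilon B_0)^{-1}$ and the commutator \eqref{curlB}. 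This step, and Proposition~\ref{curlB0=B1curl} behind it, are missing from your argument.
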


\begin{proof}
Let $D_\beta$ be the space on the right-hand side of \eqref{RSOp1}. 
First note that, thanks to the condition \eqref{TES3} on $\beta$, 
$\beta {\rm Tr}_{|_{\partial\Omega}}u\in L^2_{\rm tan}(\partial\Omega,{\mathbb{R}}^3)$
whenever $u\in W_T$. Next, remark that for $u\in D_\beta$, 
since ${\rm curl}\,u\in L^2(\Omega,{\mathbb{R}}^3)$ and
${\rm curl}\,{\rm curl}\,u\in L^2(\Omega,{\mathbb{R}}^3)$, the integration by parts \eqref{nutimesu} 
allows to define $\nu\times{\rm curl}\,u\in H^{-1/2}(\partial\Omega,{\mathbb{R}}^3)$. Moreover,
the condition $\nu\times{\rm curl}\,u=\beta u$ on $\partial\Omega$ implies that 
$\nu\times{\rm curl}\,u\in L^2(\partial\Omega,{\mathbb{R}}^3)$ and by the obvious
fact that ${\rm div}\,{\rm curl}\,u= 0\in L^2(\Omega)$, Proposition~\ref{sobolev} yields
${\rm Tr}_{|_{\partial\Omega}}({\rm curl}\,u)\in L^2(\partial\Omega,{\mathbb{R}}^3)$.

If $u\in D_\beta$, then 
$-\Delta u={\rm curl}\,{\rm curl}\,u \in L^2(\Omega,{\mathbb{R}}^3)$
and for all $v\in V$, we have by \eqref{nutimesu}
\begin{align}
\fra_\beta(u,v)=&
\langle{\rm curl}\,u,{\rm curl}\,v\rangle_\Omega+\langle\beta u,v\rangle_{\partial\Omega}
\label{HL1}\\[4pt]
=&\langle{\rm curl}\,{\rm curl}\,u,v\rangle_\Omega
-\langle \nu \times {\rm curl}\,u, v\rangle_{\partial\Omega}
+\langle\beta u,v\rangle_{\partial\Omega}
\label{HL2}\\[4pt]
=&\langle {\mathbb{P}}({\rm curl}\,{\rm curl}\,u),v\rangle_\Omega.
\label{HL3}
\end{align}
Since ${\mathbb{P}}({\rm curl}\,{\rm curl}\,u) \in H$, 
we have then proved that for all $u\in D_\beta$, $u\in D(A_\beta)$ and 
$A_\beta u= {\mathbb{P}}({\rm curl}\,{\rm curl}\,u)$.

Conversely, let $u\in V\subset W_T$ and set $g:=\beta {\rm Tr}_{|_{\partial\Omega}}u$. As already
mentioned, $g\in L^2_{\rm tan}(\partial\Omega,{\mathbb{R}}^3)$ thanks to \eqref{TES3}.
We can then apply Lemma~\ref{extnutimesu}
to obtain $w\in L^2(\Omega,{\mathbb{R}}^3)$ with ${\rm curl}\,w\in L^2(\Omega,{\mathbb{R}}^3)$
satisfying 
\begin{equation}
\label{def:w}
\langle g, v\rangle_{\partial\Omega}=
\langle{\rm curl}\,w,v\rangle_\Omega-\langle w,{\rm curl}\,v\rangle_\Omega
\quad \mbox{ for all }v\in W_T.
\end{equation}
Therefore, for a fixed $u\in V$, we can rewrite $\fra_\beta(u,\cdot)$ as follows:
\begin{equation}
\label{fra-w}
\fra_\beta(u,v)=\langle{\rm curl}\,u,{\rm curl}\,v\rangle_\Omega
+\langle{\rm curl}\,w,v\rangle_\Omega-\langle w,{\rm curl}\,v\rangle_\Omega 
\quad \mbox{for all }v\in V.
\end{equation}
We assume now that $u\in D(A_\beta)$. Since  $A_\beta u\in H\subset L^2(\Omega,{\mathbb{R}}^3)$ 
and ${\mathbb{P}}v\in V$ for $v\in W_T$, we can write
\begin{align}
\langle A_\beta u,v\rangle_\Omega&=\langle A_\beta u,{\mathbb{P}}v\rangle_\Omega
=\fra_\beta(u,{\mathbb{P}}v) 
\label{PAbeta}\\[4pt]
&= \langle{\rm curl}\,u,{\rm curl}\,{\mathbb{P}}v\rangle_\Omega
+\langle{\rm curl}\,w,{\mathbb{P}}v\rangle_\Omega-\langle w,{\rm curl}\,{\mathbb{P}}v\rangle_\Omega
\label{PAbeta1}\\[4pt]
&=\langle{\rm curl}\,u-w,{\rm curl}\,v\rangle_\Omega
+\langle{\mathbb{P}}{\rm curl}\,w,v\rangle_\Omega.
\label{PAbeta2}
\end{align}
The last equality \eqref{PAbeta2} comes from \eqref{Pcom} and the fact that 
${\rm curl}\,{\mathbb{P}}v={\rm curl}\,v$.
Therefore we obtain
\begin{equation}
\label{PAbeta3}
\langle A_\beta u-{\mathbb{P}}{\rm curl}\,w,v\rangle_\Omega
=\langle{\rm curl}\,u-w,{\rm curl}\,v\rangle_\Omega  \quad \mbox{for all }v\in W_T.
\end{equation}
For all $v\in H^1_0(\Omega,{\mathbb{R}}^3)\subset W_T$, \eqref{PAbeta3} becomes
$$
\langle A_\beta u-{\mathbb{P}}{\rm curl}\,w,v\rangle_\Omega
=\ _{H^{-1}}\langle{\rm curl}\,({\rm curl}\,u-w),v\rangle_{H^1_0},
$$
which implies that ${\rm curl}\,({\rm curl}\,u-w)\in L^2(\Omega,{\mathbb{R}}^3)$ and 
ultimately, since ${\rm curl}\,w\in L^2(\Omega,{\mathbb{R}}^3)$, 
${\rm curl}\,{\rm curl}\,u\in L^2(\Omega,{\mathbb{R}}^3)$.

We have proved that for $u\in D(A_\beta)$, ${\rm curl}\,{\rm curl}\,u\in L^2(\Omega,{\mathbb{R}}^3)$. 
It remains to identify $A_\beta u$ and the boundary condition $\nu\times{\rm curl}\,u=\beta u$ 
on $\partial\Omega$ for $u\in D(A_\beta)$. Note that this condition is well defined  thanks to 
\eqref{nutimesu} since 
${\rm curl}\,u\in L^2(\Omega,{\mathbb{R}}^3)$ ($u\in D(A_\beta)\subset V\subset W_T$) and
${\rm curl}\,{\rm curl}\,u\in L^2(\Omega,{\mathbb{R}}^3)$. By definition \eqref{formRobinStokes}
of $\fra_\beta$ and thanks to \eqref{Pcom}, we have for all $v\in {\mathscr{D}}(\Omega)$ (recall that
${\mathscr{D}}(\Omega)=\bigl\{w\in {\mathscr{C}}_c^\infty(\Omega,{\mathbb{R}}^3), {\rm div}\,w=0
\mbox{ in }\Omega\bigr\}$ has been defined in Remark~\ref{densityH})
\begin{align}
\langle A_\beta u,v\rangle_\Omega
&=\fra_\beta(u,v) 
= \langle{\rm curl}\,u,{\rm curl}\,v\rangle_\Omega
\nonumber\\[4pt]
&=\langle{\rm curl}\,{\rm curl}\,u,v\rangle_\Omega 
= \langle{\rm curl}\,{\rm curl}\,u,{\mathbb{P}}v\rangle_\Omega
\nonumber\\[4pt]
&=\langle{\mathbb{P}}({\rm curl}\,{\rm curl}\,u),v\rangle_\Omega ,
\end{align}
since ${\mathbb{P}}v=v$. This proves that 
$A_\beta u={\mathbb{P}}({\rm curl}\,{\rm curl}\,u)$ since ${\mathscr{D}}(\Omega)$ is dense in $H$ 
(see Remark~\ref{densityH}).

Now, let $v\in V$ and  recall that 
${\rm Tr}_{|_{\partial\Omega}}v\in H^{1/2}(\partial\Omega,{\mathbb{R}}^3)$.
We have then by \eqref{nutimesu}
\begin{align*}
\langle {\mathbb{P}}({\rm curl}\,{\rm curl}\,u),v\rangle_\Omega
&= \langle A_\beta u,v\rangle_\Omega\ =\ \fra_\beta(u,v)
\nonumber\\[4pt]
&=\langle{\rm curl}\,u,{\rm curl}\,v\rangle_\Omega
+\langle\beta u,v\rangle_{\partial\Omega}
\\[4pt]
&=\langle{\rm curl}\,{\rm curl}\,u,v\rangle_\Omega
-\ _{H^{-1/2}}\langle\nu\times{\rm curl}\,u,v\rangle_{H^{1/2}}
+\langle\beta u,v\rangle_{\partial\Omega}
\nonumber\\[4pt]
&=\langle{\mathbb{P}}({\rm curl}\,{\rm curl}\,u),v\rangle_\Omega
-\ _{H^{-1/2}}\langle\nu\times{\rm curl}\,u,v\rangle_{H^{1/2}}
+\langle\beta u,v\rangle_{\partial\Omega},
\end{align*}
which proves that 
\begin{equation}
\label{PAbeta4}
\ _{H^{-1/2}}\langle\beta u-\nu\times{\rm curl}\,u,v\rangle_{H^{1/2}}=0
\quad \mbox{for all }v\in V. 
\end{equation}
Let $\varphi\in H^{1/2}_{\rm tan}(\partial\Omega,{\mathbb{R}}^3)$ be arbitrary. By Lemma~\ref{CMcI},
we can find $v\in V$ such that $v_{|_{\partial\Omega}}=\varphi$ on $\partial\Omega$. Therefore,
\eqref{PAbeta4} implies that for all $\varphi\in H^{1/2}_{\rm tan}(\partial\Omega,{\mathbb{R}}^3)$
\begin{equation}
\label{PAbeta5}
\ _{H^{-1/2}}\langle\beta u-\nu\times{\rm curl}\,u,\varphi\rangle_{H^{1/2}}=0,
\end{equation}
With $w\in L^2(\Omega,{\mathbb{R}}^3)$ such that ${\rm curl}\,w\in L^2(\Omega,{\mathbb{R}}^3)$
satisfying $\beta u=\nu\times w$ in $H^{-1/2}(\partial\Omega,{\mathbb{R}}^3)$ as in 
Lemma~\ref{extnutimesu}, it follows from \eqref{PAbeta5} that $w_1:=w-{\rm curl}\,u$ satisfies
\begin{equation}
\label{eqWT}
\langle {\rm curl}\,w_1,v\rangle_\Omega-\langle w_1,{\rm curl}\,v\rangle_\Omega=0
\quad\mbox{for all }v\in W_T.
\end{equation}
Let now $v\in H^1(\Omega,{\mathbb{R}}^3)$ and denote for $\varepsilon>0$, 
$v_\varepsilon=(1+\varepsilon B_0)^{-1}v$ (recall that the operator $B_0$ has been
defined in \eqref{def:B0}). It is clear that $v_\varepsilon\in W_T$ for all $\varepsilon>0$
and 
$$
v_\varepsilon\longrightarrow v\quad\mbox{ in }L^2(\Omega,{\mathbb{R}}^3)\quad\mbox{ as }
\varepsilon\to 0.
$$
Moreover, thanks to \eqref{curlB}, we have
$$
{\rm curl}\,v_\varepsilon=(1+\varepsilon B_1)^{-1}{\rm curl}\,v
\longrightarrow v\quad\mbox{ in }L^2(\Omega,{\mathbb{R}}^3)\quad\mbox{ as }\varepsilon\to 0.
$$
Applying \eqref{eqWT} to $v_\varepsilon$ and taking the limit as $\varepsilon\to 0$, we obtain
$$
0=\langle {\rm curl}\,w_1,v_\varepsilon\rangle_\Omega-
\langle w_1,{\rm curl}\,v_\varepsilon\rangle_\Omega
\longrightarrow \langle {\rm curl}\,w_1,v\rangle_\Omega-\langle w_1,{\rm curl}\,v\rangle_\Omega
\quad\mbox{ as }\varepsilon\to 0.
$$
It follows then that $\nu\times w_1=0$ in $H^{-1/2}(\partial\Omega,{\mathbb{R}}^3)$ and therefore
$\beta u-\nu\times{\rm curl}\,u=0$ in $H^{-1/2}(\partial\Omega,{\mathbb{R}}^3)$.

Finally, the fact that $-A_\beta$ generates an analytic semigroup of contractions  follows from the 
fact that $A_\beta$ is a non-negative self-adjoint operator. 
The equality $D(A_\beta^{\frac{1}{2}})=V$ is a standard result for symmetric bilinear closed forms
(see \cite{Li62} and \cite{Ka62}).
\end{proof}

\begin{corollary}
\label{corcurl}
If $u \in D(A_\beta)$ then ${\rm curl}\, u \in L^3(\Omega, {\mathbb R}^3)$ and 
there exists a constant $C_\Omega$ independent of $u$ such that 
$$
\| {\rm curl}\, u \|_3 \le C_\Omega \left( \| A_\beta u \|_H + (\|\beta\|_\infty+1) \| u \|_V \right).
$$
\end{corollary}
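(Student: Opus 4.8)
The plan is to exploit the description of $D(A_\beta)$ in Theorem~\ref{RSOp} together with the regularity gain furnished by Proposition~\ref{sobolev} and the embedding $W_{T,N}\hookrightarrow H^{1/2}(\Omega,{\mathbb{R}}^3)$, followed by the Sobolev embedding $H^{1/2}(\Omega,{\mathbb{R}}^3)\hookrightarrow L^3(\Omega,{\mathbb{R}}^3)$ (valid in dimension $3$). Let $u\in D(A_\beta)$. By Theorem~\ref{RSOp} we have $u\in V\subset W_T$, ${\rm curl}\,{\rm curl}\,u\in L^2(\Omega,{\mathbb{R}}^3)$, and $\nu\times{\rm curl}\,u=\beta u$ on $\partial\Omega$. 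Set $z:={\rm curl}\,u$. Then $z\in L^2(\Omega,{\mathbb{R}}^3)$, ${\rm div}\,z=0\in L^2(\Omega)$, ${\rm curl}\,z={\rm curl}\,{\rm curl}\,u\in L^2(\Omega,{\mathbb{R}}^3)$, and $\nu\times z=\beta\,{\rm Tr}_{|_{\partial\Omega}}u\in L^2(\partial\Omega,{\mathbb{R}}^3)$ (this is in $L^2$ because ${\rm Tr}_{|_{\partial\Omega}}u\in H^{1/2}(\partial\Omega,{\mathbb{R}}^3)\hookrightarrow L^2(\partial\Omega,{\mathbb{R}}^3)$ and $\beta$ is bounded). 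Hence $z$ satisfies the hypotheses of Proposition~\ref{sobolev} with the tangential-trace alternative, so $z\in H^{1/2}(\Omega,{\mathbb{R}}^3)$ with the quantitative estimate \eqref{sobolev1}.

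Next I would make the right-hand side of \eqref{sobolev1} explicit in terms of $\|A_\beta u\|_H$ and $\|u\|_V$. We have $\|z\|_2=\|{\rm curl}\,u\|_2\le\|u\|_V$ and $\|{\rm div}\,z\|_2=0$. For $\|{\rm curl}\,z\|_2=\|{\rm curl}\,{\rm curl}\,u\|_2$: since $A_\beta u={\mathbb{P}}({\rm curl}\,{\rm curl}\,u)$, write ${\rm curl}\,{\rm curl}\,u=A_\beta u+\nabla p$ for some $p\in H^1(\Omega)$; but rather than estimate $\nabla p$ directly, I would instead go back to the identity \eqref{PAbeta3} (or, more cleanly, test the form identity) to bound $\|{\rm curl}\,{\rm curl}\,u\|_2$. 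The cleanest route: from \eqref{HL1}--\eqref{HL3}, for all $v\in V$, $\langle A_\beta u,v\rangle_\Omega=\langle{\rm curl}\,u,{\rm curl}\,v\rangle_\Omega+\langle\beta u,v\rangle_{\partial\Omega}$, and since ${\rm curl}\,{\rm curl}\,u={\rm curl}\,z$ with $z\in W_N$ (note $\nu\times z$ need not vanish, so one must be slightly careful — use instead that ${\rm curl}\,{\rm curl}\,u\in L^2$ and ${\rm div}({\rm curl}\,{\rm curl}\,u)=0$, so ${\rm curl}\,{\rm curl}\,u\in H$ up to a gradient) we get $\|{\mathbb{P}}({\rm curl}\,{\rm curl}\,u)\|_2=\|A_\beta u\|_H$. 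For the curl-free part of ${\rm curl}\,{\rm curl}\,u$: since ${\rm div}\,({\rm curl}\,{\rm curl}\,u)=0$ and ${\rm curl}\,{\rm curl}\,u$ has $L^2$ normal trace determined by the data, one shows $\nabla p$ solves a Neumann problem with boundary data controlled by $\|\beta u\|_{L^2(\partial\Omega)}$ via the tangential boundary condition, giving $\|\nabla p\|_2\le C(\|\beta\|_\infty+1)\|u\|_V$ (using the trace $W_T\to H^{1/2}(\partial\Omega)\hookrightarrow L^2(\partial\Omega)$). Finally the minimum term in \eqref{sobolev1}: $\|\nu\times z\|_{L^2(\partial\Omega)}=\|\beta\,{\rm Tr}\,u\|_{L^2(\partial\Omega)}\le\|\beta\|_\infty\,C\|u\|_{H^1(\Omega)}\le C(\|\beta\|_\infty)\,\|u\|_V$ by \eqref{WdansH1}. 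Collecting all terms yields $\|z\|_{H^{1/2}(\Omega)}\le C_\Omega(\|A_\beta u\|_H+(\|\beta\|_\infty+1)\|u\|_V)$, and the Sobolev embedding $H^{1/2}(\Omega,{\mathbb{R}}^3)\hookrightarrow L^3(\Omega,{\mathbb{R}}^3)$ then gives the claimed bound on $\|{\rm curl}\,u\|_3$.

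The main obstacle I anticipate is the bookkeeping around the Helmholtz projection applied to ${\rm curl}\,{\rm curl}\,u$: one needs to control both $\|{\mathbb{P}}({\rm curl}\,{\rm curl}\,u)\|_2$ (which is exactly $\|A_\beta u\|_H$) and the complementary gradient part $\|\nabla p\|_2$ cleanly in terms of $\|u\|_V$ and $\|\beta\|_\infty$, so that the right-hand side of \eqref{sobolev1} — which asks for $\|{\rm curl}\,{\rm curl}\,u\|_2=\|{\rm curl}\,z\|_2$, not just its projection — is fully estimated. The resolution is that the gradient part is \emph{not} arbitrary: $\nabla p$ is curl-free, so its $L^2$-norm is controlled by its normal trace on $\partial\Omega$, and the normal trace of ${\rm curl}\,{\rm curl}\,u$ is pinned down by ${\rm div}\,({\rm curl}\,{\rm curl}\,u)=0$ together with the tangential boundary data $\beta u$; alternatively one avoids $p$ entirely by noting that in Proposition~\ref{sobolev} applied to $z$, one may replace $\|{\rm curl}\,z\|_2$ in an equivalent estimate by an expression only involving $\|z\|_2$, $\|A_\beta u\|_H$, and the $L^2(\partial\Omega)$ norm of $\nu\times z=\beta u$, since $z$ together with these data determines $z$ in $H^{1/2}$ up to the harmonic-field finite-dimensional space, which is itself finite-dimensional and hence harmless. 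Everything else is routine: boundedness of $\beta$, the trace inequality \eqref{WdansH1}, and the dimension-$3$ Sobolev embedding.
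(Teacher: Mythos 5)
Your main line is exactly the paper's: apply Proposition~\ref{sobolev} to $z={\rm curl}\,u$ (using ${\rm div}\,z=0$, ${\rm curl}\,z={\rm curl}\,{\rm curl}\,u\in L^2(\Omega,{\mathbb R}^3)$ and the tangential trace $\nu\times z=\beta u\in L^2(\partial\Omega,{\mathbb R}^3)$), then invoke $H^{1/2}(\Omega,{\mathbb R}^3)\hookrightarrow L^3(\Omega,{\mathbb R}^3)$. The paper's proof stops there: its displayed estimate keeps $\|{\rm curl}\,{\rm curl}\,u\|_2$ on the right-hand side and performs no Helmholtz decomposition of ${\rm curl}\,{\rm curl}\,u$.

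The part of your write-up that goes beyond this --- converting $\|{\rm curl}\,{\rm curl}\,u\|_2$ into $\|A_\beta u\|_H=\|{\mathbb P}({\rm curl}\,{\rm curl}\,u)\|_2$ plus lower-order terms --- correctly isolates the one delicate point (since $\nu\times{\rm curl}\,u=\beta u$ need not be surface-divergence free, ${\rm curl}\,{\rm curl}\,u$ need not lie in $H$), but neither of your two proposed fixes is sound as written. For the Neumann route: $\nabla p=({\rm Id}-{\mathbb P})({\rm curl}\,{\rm curl}\,u)$ solves $\Delta p=0$ with $\partial_\nu p=\nu\cdot({\rm curl}\,{\rm curl}\,u)=-{\rm div}_{\partial\Omega}(\beta u)$; since $\beta$ is only bounded measurable in $x$ and the quantities $\|\beta\|_\infty$, $\|u\|_V$ control ${\rm Tr}_{|_{\partial\Omega}}(\beta u)$ only in $L^2(\partial\Omega,{\mathbb R}^3)$, this datum lies merely in $H^{-1}(\partial\Omega)$ rather than $H^{-1/2}(\partial\Omega)$, so the variational bound $\|\nabla p\|_2\le C\|\partial_\nu p\|_{H^{-1/2}(\partial\Omega)}$ you are implicitly invoking is not available, and the claimed estimate $\|\nabla p\|_2\le C(\|\beta\|_\infty+1)\|u\|_V$ does not follow from the argument given. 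Your alternative (``up to the harmonic-field finite-dimensional space'') conflates the finite-dimensional cohomology spaces with the infinite-dimensional space of gradients of harmonic functions, which is precisely where the uncontrolled part of ${\rm curl}\,z$ lives. Be aware that the paper itself does not address this discrepancy between $\|{\rm curl}\,{\rm curl}\,u\|_2$ (what \eqref{sobolev1} requires) and $\|A_\beta u\|_H$ (what appears in the statement); if you state the bound with $\|{\rm curl}\,{\rm curl}\,u\|_2$ in place of $\|A_\beta u\|_H$, your argument --- like the paper's --- closes with no further work.
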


\begin{proof} 
Let $u \in D(A_\beta)$. By Theorem~\ref{RSOp}, ${\rm curl}\,u\in L^2(\Omega,{\mathbb{R}}^3)$,
${\rm curl}\,{\rm curl}\,u\in L^2(\Omega,{\mathbb{R}}^3)$ and 
$\nu\times{\rm curl}\,u=\beta u\in L^2(\partial\Omega,{\mathbb{R}}^3)$. Therefore, by
Proposition~\ref{sobolev}, ${\rm curl}\,u\in H^{1/2}(\Omega,{\mathbb{R}}^3)$ with
the estimate
\begin{align*}
\|{\rm curl}\,u\|_{H^{1/2}(\Omega,{\mathbb{R}}^3)}
&\le C\bigl(\|{\rm curl}\,u\|_{L^2(\Omega,{\mathbb{R}}^3)}+
\|{\rm curl}\,{\rm curl}\,u\|_{L^2(\Omega,{\mathbb{R}}^3)}
+\|\beta u\|_{L^2(\partial\Omega,{\mathbb{R}}^3)}\bigl)
\\[4pt]
&\le C\bigl((\|\beta\|_\infty + 1)\|u\|_{V}+
\|{\rm curl}\,{\rm curl}\,u\|_{L^2(\Omega,{\mathbb{R}}^3)}\bigl).
\end{align*}
This latter estimate together with the following Sobolev embedding valid in dimension 3
$$
H^{1/2}(\Omega,{\mathbb{R}}^3)\hookrightarrow
L^3(\Omega,{\mathbb{R}}^3)
$$
proves the corollary. 
\end{proof}

\section{Maximal regularity for non-autonomous equations}
\label{sec:main}

Our aim in this section is to show maximal regularity for the Stokes problem.  
We first recall some recent results on maximal regularity for evolution equations associated 
with time-dependent sesquilinear forms.

Let ${\mathcal{H}}$ be a Hilbert space and let ${\mathcal{V}}$ be another
Hilbert space with dense and continuous embedding in ${\mathcal{H}}$. Consider a family 
of sesquilinear forms $(\fra(t))_{0 \le t \le \tau}$
such that $D(\fra(t))={\mathcal{V}}$ for all $t$. We suppose that  
$(\fra(t))_{0 \le t \le \tau}$ is uniformly bounded in the sense that  there exists a 
constant $M$ independent of $t$ such that 
\begin{equation}
\label{continuity}
|\fra(t,u,v)|\le M\|u\|_{\mathcal{V}}\|v\|_{\mathcal{V}} 
\end{equation}
for all $u, v\in{\mathcal{V}}$. Here $\|v\|_{\mathcal{V}}$ denotes the norm of ${\mathcal{V}}$. 
We also suppose that $(\fra(t))_{0 \le t \le \tau}$ is quasi-coercive, i.e., 
there exists $\delta>0$ and $\mu \in{\mathbb{R}}$ such that
\begin{equation}
\label{coercive}
\delta \|u\|_{\mathcal{V}}^2 \le\fra(t,u,u)+\mu \|u\|_{\mathcal{H}}^2,
\end{equation} 
for all $u \in {\mathcal{V}}$.

\noindent
For each fixed $t$, the form $\fra(t)$ is closed. Denote by 
${\mathcal{A}}(t) : {\mathcal{V}} \to{\mathcal{V}}'$ the operator associated with 
$\fra(t)$ in the sense that 
$$ 
\fra(t,u,v) = \ _{{\mathcal V}'}\langle{\mathcal{A}}(t)u,v\rangle_{\mathcal{V}}, 
\, \forall\ u, v \in {\mathcal{V}}.
$$
The operator associated with $\fra(t)$ on ${\mathcal{H}}$ is the part of ${\mathcal{A}}(t)$. 
That is, 
$$
D(A(t)) = \{u\in{\mathcal{V}}, {\mathcal{A}}(t)u\in{\mathcal{H}} \}, 
\quad {\mathcal{A}}(t)u=A(t) u.
$$
We consider now the evolution problem
\begin{equation}  
\label{eq:evol-eq} \tag{P}
\left\{
\begin{array}{rcl}
\partial_t u(t) + A(t)u(t)&=&f(t) 
\\[4pt]
u(0)&=&u_0. 
\end{array}\right.
\end{equation}
One says that (P) has $L^p$ maximal regularity in ${\mathcal{H}}$ if for every 
$f \in L^p(0,\tau,{\mathcal{H}})$ there exists a unique $u \in W^{1,p}(0,\tau,{\mathcal{H}})$
which satisfies the problem in the $L^p$-sense. Note that one has in addition that 
$t \mapsto A(t)u(t) $ is in $L^p(0,\tau,{\mathcal{H}})$. 

\noindent
Maximal regularity for non-autonomous equations in ${\mathcal{H}}$ has been investigated 
recently in the context of operators associated with forms as we described above. The following 
is a particular case of a result proved in \cite{HO}.  

\begin{theorem}
\label{thm:HO}
Let $(\fra(t))_{0 \le t \le \tau}$  be a family of sesquilinear forms satisfying the previous  
conditions \eqref{continuity} and \eqref{coercive}. Suppose in addition that 
$t \mapsto \fra(t)$ is piecewise $\alpha-$H\"older
continuous for some $\alpha>1/2$ in the sense that there exist 
$t_0 = 0 < t_1< ...< t_k = \tau$ and constants $M_i$ such that the restriction of 
$t\mapsto \fra(t,.,.)$ to $(t_i,t_{i+1})$ satisfies
\begin{equation}
\label{H12}
|\fra(t,u,v)-\fra(s,u,v)|\le M_i | t - s |^{\alpha} \|u\|_{\mathcal{V}} \|v\|_{\mathcal{V}}  \quad
\mbox{for all }u,v\in{\mathcal{V}}.
\end{equation}
Then the Cauchy problem {\rm(P)} has $L^2$-maximal regularity for all 
$u_0 \in D({(w_0 + A(0))}^{1/2})$.
\end{theorem}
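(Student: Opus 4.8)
The plan is to reduce to the autonomous theory for the single operator $A(0)$ — namely de~Simon's theorem, that every generator of a bounded analytic semigroup on a Hilbert space enjoys $L^2$–maximal regularity — and then to control the error produced by freezing the coefficients. The exponent $\alpha>1/2$ will be used precisely to make the resulting error kernels integrable; the a~priori regularity $u\in L^2(0,\tau;{\mathcal V})\cap H^1(0,\tau;{\mathcal V}')$ furnished by Lions' classical theorem is, by itself, not enough.

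First I would normalise and pass to homogeneous initial data. Replacing $A(t)$ by $w_0+A(t)$ with $w_0$ large (which affects neither \eqref{continuity} nor \eqref{H12} and upgrades \eqref{coercive} to genuine coercivity) we may assume every $A(t)$ is boundedly invertible and uniformly sectorial, and work throughout with the Gelfand triple ${\mathcal V}\hookrightarrow{\mathcal H}\hookrightarrow{\mathcal V}'$ and the realisations ${\mathcal A}(t):{\mathcal V}\to{\mathcal V}'$. Since $D((w_0+A(0))^{1/2})=({\mathcal H},D(A(0)))_{1/2,2}$ is exactly the trace space of $L^2$–maximal regularity for the constant operator $A(0)$, extending the equation to $[-1,\tau]$ by setting $A(t):=A(0)$ on $[-1,0]$ and prescribing there a maximal–regularity lift of $u_0$ reduces everything to the case $u_0=0$. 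As $t\mapsto\fra(t)$ is only piecewise Hölder I would moreover solve successively on the intervals $(t_i,t_{i+1})$ — the value at $t_i$ lying in ${\mathcal V}$, hence in the trace space of $A(t_i)$, by Lions' a~priori $L^2({\mathcal V})$–bound and the same extension — so that one may assume in addition that $t\mapsto\fra(t)$ is globally $\alpha$–Hölder with $\alpha>1/2$.

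With $u_0=0$ and $\fra$ globally $\alpha$–Hölder, let $(U(t,s))_{0\le s\le t\le\tau}$ be the evolution family generated by $(A(t))_t$, so that $u(t)=\int_0^tU(t,s)f(s)\,ds$. Using the parabolic smoothing $\|U(\sigma,s)\|_{{\mathcal L}({\mathcal H},{\mathcal V})}\lesssim(\sigma-s)^{-1/2}$ (itself established from the Hölder continuity, by comparison with the frozen semigroups), freezing the generator at the running time $t$, and Duhamel's formula
\[
U(t,s)-e^{-(t-s)A(t)}=\int_s^te^{-(t-\sigma)A(t)}\bigl(A(t)-A(\sigma)\bigr)U(\sigma,s)\,d\sigma,
\]
I would split $A(\cdot)u=Sf+Rf$, where $Sf(t)=\int_0^tA(t)e^{-(t-s)A(t)}f(s)\,ds$ and $R$ collects the remainder. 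The boundedness of $R$ on $L^2(0,\tau;{\mathcal H})$ is where the threshold bites: combining $\|A(t)e^{-rA(t)}\|_{{\mathcal L}({\mathcal V}',{\mathcal H})}\lesssim r^{-3/2}$ with the Hölder bound $\|A(t)-A(\sigma)\|_{{\mathcal L}({\mathcal V},{\mathcal V}')}\lesssim|t-\sigma|^\alpha$ read off from \eqref{H12} and the smoothing of $U$, the Duhamel integral is dominated by a Beta integral $\int_s^t(t-\sigma)^{\alpha-3/2}(\sigma-s)^{-1/2}\,d\sigma$, which is \emph{finite only for $\alpha>1/2$}, and then of size $(t-s)^{\alpha-1}$; hence $\|A(t)(U(t,s)-e^{-(t-s)A(t)})\|_{{\mathcal L}({\mathcal H})}\lesssim(t-s)^{\alpha-1}$ and $R$ is bounded on $L^2(0,\tau;{\mathcal H})$ by a weakly singular convolution estimate.

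The remaining piece, and the main obstacle, is the boundedness of $S$ on $L^2(0,\tau;{\mathcal H})$ with a constant uniform in the frozen parameter. For each fixed $t$, $S$ is the singular integral from de~Simon's theorem for the operator $A(t)$; to let $t$ vary I would again decompose $A(t)=A(s)+(A(t)-A(s))$ inside the integral and control the correction by the $\alpha$–Hölder continuity of $t\mapsto A(t)$ together with uniform sectoriality and uniform square–function estimates for the frozen generators. The difficulty is that the forms $\fra(t)$ need not be symmetric, so the $A(t)$ are not self–adjoint and these square–function bounds cannot be read off a spectral decomposition but must be extracted from uniform sectoriality and coercivity alone — and the corrections created by varying $t$ carry the same $|t-s|^{\alpha-1}$–type kernels as above, so $\alpha>1/2$ is once more what saves the $L^2$–bound (and known counterexamples in the borderline case $\alpha=1/2$ confirm it is needed). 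Granting the boundedness of both $S$ and $R$, we obtain $A(\cdot)u\in L^2(0,\tau;{\mathcal H})$, whence $\partial_tu=f-A(\cdot)u\in L^2(0,\tau;{\mathcal H})$, i.e.\ $u\in W^{1,2}(0,\tau;{\mathcal H})$ solves (P) in the $L^2$–sense; uniqueness is already contained in Lions' theorem.
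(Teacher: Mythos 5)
A point of orientation first: the paper does not prove Theorem~\ref{thm:HO} at all --- it is quoted as a particular case of the main result of \cite{HO} --- so your sketch is measured against that reference rather than against an argument in the text. Your overall strategy is the right one and matches \cite{HO} in spirit: freeze the generator at the running time, use the representation formula (which reappears verbatim as \eqref{uu} in the proof of Proposition~\ref{pro-bdd}), and locate the threshold $\alpha>1/2$ in the Beta integral $\int_s^t(t-\sigma)^{\alpha-3/2}(\sigma-s)^{-1/2}\,d\sigma$ produced by playing $\|A(t)e^{-rA(t)}\|_{{\mathscr L}({\mathcal V}',{\mathcal H})}\lesssim r^{-3/2}$ against the H\"older bound \eqref{H12}. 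The preliminary reductions (shifting by $w_0$ to get genuine coercivity, splitting $[0,\tau]$ at the points $t_i$, lifting $u_0\in D((w_0+A(0))^{1/2})=({\mathcal H},D(A(0)))_{1/2,2}$ by the autonomous theory) are all sound.

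The genuine gap is the step you yourself flag and then ``grant'': the uniform $L^2(0,\tau;{\mathcal H})$-boundedness of $Sf(t)=\int_0^t A(t)e^{-(t-s)A(t)}f(s)\,ds$. This is not de~Simon's theorem --- there the generator is fixed, whereas here $A(t)$ moves with the outer variable --- and it is precisely the heart of \cite{HO}. Your proposed repair, decomposing $A(t)=A(s)+(A(t)-A(s))$ inside the integral, does not close: the correction $(A(t)-A(s))e^{-(t-s)A(t)}f(s)$ only lands in ${\mathcal V}'$ with a kernel of order $(t-s)^{\alpha-1/2}$, not in ${\mathcal H}$, and the leading term $\int_0^t A(s)e^{-(t-s)A(t)}f(s)\,ds$ is still a genuinely two-parameter singular integral. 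What is actually needed is a uniform square-function estimate for the non-self-adjoint frozen generators --- available on a Hilbert space from \eqref{continuity}--\eqref{coercive} via the bounded $H^\infty$-calculus, or by a direct form computation of the kind used in the proof of \eqref{a3} --- combined with a Schur-type or dyadic estimate on the off-diagonal commutator pieces; this occupies the bulk of \cite{HO} and cannot be waved through. A secondary, lesser gap: you invoke an evolution family $U(t,s)$ with smoothing $\|U(\sigma,s)\|_{{\mathscr L}({\mathcal H},{\mathcal V})}\lesssim(\sigma-s)^{-1/2}$; constructing it under mere H\"older continuity of $t\mapsto{\mathcal A}(t)$ in ${\mathscr L}({\mathcal V},{\mathcal V}')$ is itself nontrivial, and \cite{HO} sidestep it by applying the representation formula directly to the Lions solution $u\in L^2(0,\tau;{\mathcal V})\cap H^1(0,\tau;{\mathcal V}')$, whose existence and uniqueness are already guaranteed.
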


Note that if the form $\fra(0)$ is symmetric  then $D({(\mu + A(0))}^{1/2})={\mathcal{V}}$. 
Recall also that if the $L^2$-maximal regularity holds for (P) then the solution $u$
satisfies the a priori estimate
\begin{equation}
\label{apriori-beta}
\|u\|_{H^1(0,\tau,{\mathcal{H}})}+\|A(t)u(t) \|_{L^2(0, \tau,{\mathcal{H}})} 
\le C \bigl( \|f\|_{L^2(0, \tau,{\mathcal{H}})}+\|u_0\|_{\mathcal{V}}\bigr).
\end{equation}
Now we turn back to the Robin-Stokes operator $A_\beta$. As previously,  
$\Omega$ denotes  a bounded domain of ${\mathbb{R}}^3$ which is either 
${\mathscr{C}}^{1,1}$ or convex. Let ${\mathcal{H}}:= H$ defined by  \eqref{def:H}, that is 
$$
H:=\bigl\{u\in L^2(\Omega,{\mathbb{R}}^3); {\rm div}\,u=0\mbox{ in } \Omega, 
\nu\cdot u=0\mbox{ on } \partial\Omega\bigr\}
$$
and $\fra_\beta$ the form defined by \eqref{formRobinStokes}. We assume in addition to \eqref{TES1}, 
\eqref{TES2} and \eqref{TES3} that 
$t \mapsto \beta(t,x)$ is piecewise H\"older continuous of order $\alpha>1/2$. 
This means that there exist $t_i$, $0\le i\le n$ such that  
$[0, \tau] = \cup_{i=0}^n [t_i, t_{i+1}]$ and constants $M_i$ such that on each interval 
$(t_i, t_{i+1})$, $\beta$ is the restriction of some $\widetilde{\beta}$ such that 
\begin{equation}
\label{Holder-beta}
\|\widetilde{\beta}(t,x)-\widetilde{\beta}(s,x) \|_{{\mathscr{M}}_3} 
\le M_i | t -s |^\alpha \quad \mbox{for all } t,s \in [t_i, t_{i+1}]  \mbox{ and a.e. }x \in\partial\Omega.
\end{equation}
Here $\|\cdot\|_{{\mathscr{M}}_3}$ denotes the operator norm in ${\mathscr{M}}_3$. 

The family of forms  $\fra_\beta = \fra_{\beta(t, \cdot)}$, $0 \le t \le \tau$, satisfies the assumptions 
of Theorem~\ref{thm:HO}. In order to check \eqref{H12} we write for $u,v\in V$ and 
$t, s \in (t_i, t_{i+1})$ 
\begin{align*}
|\fra_{\beta(t,\cdot)}(u,v)-\fra_{\beta(s,\cdot)}(u,v)| &= 
\langle (\beta(t,\cdot) - \beta(s,\cdot) )u, v\rangle_{\partial \Omega}
\\[4pt]
&\le \sup_{x \in \partial\Omega} \|\beta(t,x)-\beta(s,x)\|_{{\mathscr{M}}_3} 
\|{\rm Tr}_{|{\partial\Omega}}u\|_{L^2(\partial\Omega,{\mathbb{R}}^3)} 
\|{\rm Tr}_{|{\partial\Omega}}v \|_{L^2(\partial\Omega,{\mathbb{R}}^3)}
\\[4pt]
&\le C M_i  | t -s |^\alpha  \|u\|_V \|v\|_V.
\end{align*}
The last inequality follows from \eqref{Holder-beta} and Proposition~\ref{sobolev}.  
Therefore we conclude that $L^2$-maximal regularity holds for the Robin-Stokes operator 
$A_\beta$ on the Hilbert space $H$.

\begin{theorem}
\label{thm:reg}
Under the above assumptions, for every $u_0 \in V$ and every $f \in L^2(0,\tau,H)$ there exists 
a unique $u \in H^1(0,\tau,H)$ such that $u(t) \in D(A_{\beta(t, \cdot)})$ for almost all $t \in [0,\tau]$ 
and 
\begin{equation}  
\label{PR-A-BETA} 
\left\{
\begin{array}{rcl}
\partial_t u(t, \cdot) + A_{\beta(t,\cdot )} u(t, \cdot) &=& f(t) 
\\[4pt]
u(0)&=&u_0. 
\end{array}\right.
\end{equation}
In addition there exists a constant $C_{MR}$ independent of $t$, $f$ and $u_0$ such that 
\begin{equation}
\label{apriori-beta1}
\|u\|_{H^1(0,\tau,H)} + \|A_{\beta(t, \cdot)}u(t)\|_{L^2(0,\tau,H)} 
\le C_{MR} \bigl(\|f\|_{L^2(0,\tau,H)} + \|u_0\|_V \bigr).
\end{equation}
\end{theorem}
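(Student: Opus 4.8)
The plan is to apply Theorem~\ref{thm:HO} directly, with $\mathcal{H} := H$ as in \eqref{def:H} and $\mathcal{V} := V = W_T\cap H$, and with the family of forms $\fra(t) := \fra_{\beta(t,\cdot)}$ defined in \eqref{formRobinStokes}. Recall that $V\hookrightarrow H$ continuously (since $\|u\|_H = \|u\|_2 \le \|u\|_V$) and densely (because $\mathscr{D}(\Omega)\subset V$ and $\mathscr{D}(\Omega)$ is dense in $H$ by Remark~\ref{densityH}), and that all the forms $\fra_{\beta(t,\cdot)}$ share the same domain $V$, so the only things to verify are the structural hypotheses \eqref{continuity}, \eqref{coercive} and \eqref{H12}, together with the identification of the admissible initial data.

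First I would check uniform boundedness \eqref{continuity}: for $u,v\in V$, the Cauchy--Schwarz inequality gives
$$
|\fra_{\beta(t,\cdot)}(u,v)| \le \|{\rm curl}\,u\|_2\,\|{\rm curl}\,v\|_2 + \|\beta\|_\infty\,\|{\rm Tr}_{|_{\partial\Omega}}u\|_{L^2(\partial\Omega,{\mathbb R}^3)}\,\|{\rm Tr}_{|_{\partial\Omega}}v\|_{L^2(\partial\Omega,{\mathbb R}^3)},
$$
and the continuity of the trace $W_T\to L^2(\partial\Omega,{\mathbb R}^3)$ (from \eqref{WdansH1}, or Proposition~\ref{sobolev}) together with the bound $\|\beta\|_\infty\le M$ coming from \eqref{TES1} bounds the right-hand side by a constant multiple of $\|u\|_V\|v\|_V$, uniformly in $t$. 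For quasi-coercivity \eqref{coercive}, the nonnegativity $\beta(t,x)\ge0$ in \eqref{TES1} yields $\fra_{\beta(t,\cdot)}(u,u)\ge\|{\rm curl}\,u\|_2^2$, while every $u\in V$ satisfies ${\rm div}\,u=0$, so $\|u\|_V^2\le 2(\|u\|_2^2+\|{\rm curl}\,u\|_2^2)\le 2\,\fra_{\beta(t,\cdot)}(u,u)+2\|u\|_H^2$; hence \eqref{coercive} holds with $\delta=\tfrac12$ and $\mu=1$, again uniformly in $t$. The piecewise $\alpha$-H\"older estimate \eqref{H12} for $t\mapsto\fra_{\beta(t,\cdot)}$ has already been established in the computation preceding the statement, using \eqref{Holder-beta} and Proposition~\ref{sobolev}.

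It remains to match the class of admissible initial data. Since $\beta(0,\cdot)$ is symmetric by \eqref{TES2}, the form $\fra_{\beta(0,\cdot)}$ is symmetric, so $D\bigl((\mu+A(0))^{1/2}\bigr)=\mathcal{V}=V$ (this is the content of the identity $D(A_\beta^{1/2})=V$ in Theorem~\ref{RSOp}), and thus every $u_0\in V$ is admissible. Theorem~\ref{thm:HO} then provides a unique $u\in H^1(0,\tau,H)$ with $u(t)\in D(A_{\beta(t,\cdot)})$ for almost every $t$ solving \eqref{PR-A-BETA} in the $L^2$-sense, with $t\mapsto A_{\beta(t,\cdot)}u(t)$ in $L^2(0,\tau,H)$; the estimate \eqref{apriori-beta1} is exactly the a priori bound \eqref{apriori-beta} in this setting, with a constant $C_{MR}$ depending only on the data $\Omega$, $\tau$, $M$, $\alpha$ and the $M_i$. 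I do not anticipate any real obstacle in this proof: the genuine work was front-loaded, on the one hand into the construction and domain description of the Robin-Stokes operator via the form $\fra_\beta$ (Theorem~\ref{RSOp}), and on the other hand into showing that the hypotheses on $\beta$ force $t\mapsto\fra_\beta(t)$ to be H\"older continuous of exponent $>1/2$; once these are available, Theorem~\ref{thm:reg} is a direct transcription of Theorem~\ref{thm:HO}.
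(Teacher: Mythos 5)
Your proposal is correct and follows essentially the same route as the paper: the paper also deduces Theorem~\ref{thm:reg} by verifying the hypotheses of Theorem~\ref{thm:HO} for the family $\fra_{\beta(t,\cdot)}$ on $V\subset H$ (it only writes out the H\"older estimate \eqref{H12} explicitly, leaving \eqref{continuity} and \eqref{coercive} implicit, which you have correctly supplied). The identification of the admissible initial data via the symmetry of $\fra_{\beta(0,\cdot)}$ and the a priori bound \eqref{apriori-beta} are likewise exactly how the paper concludes.
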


Note that if \eqref{Holder-beta} holds with $\alpha=1$ then we can apply the results from \cite{ADLO} 
and obtain the previous theorem with the additional information that  the solution 
$u \in {\mathscr{C}}([0,\tau],V)$. In particular, $u\in L^\infty(0,\tau,V)$. This latter property is 
not covered by the results in \cite{HO} when \eqref{Holder-beta} holds for some $\alpha>1/2$. 
As we will need this in the next section we prove it here. We do this in a general setting. 

As in the beginning of this section, let $(\fra(t))_{0 \le t \le \tau}$ be a family of symmetric  
forms on a Hilbert space ${\mathcal{H}}$ which satisfy \eqref{continuity} and \eqref{coercive}. 
Suppose that $t \mapsto \fra(t)$ is piecewise $\alpha-$H\"older continuous for some 
$\alpha>1/2$ (see Theorem~\ref{thm:HO}). We define the space of maximal regularity 
\begin{equation}
\label{def:E}
E := \bigl\{u\in H^1(0,\tau,{\mathcal{H}}), u(t) \in D(A_{\beta(t)})\, {\rm a.e.}, 
t\mapsto A(t)u(t) \in L^2(0, \tau,{\mathcal{H}}) 
\mbox{ and } u(0)\in{\mathcal{V}}\bigr\}.
\end{equation}
The space $E$ is  endowed with the natural norm
$$
\|u\|_E:= \|u(\cdot)\|_{ H^1(0,\tau,{\mathcal{H}})} + \|A(\cdot)u(\cdot)\|_{L^2(0,\tau;{\mathcal{H}})} 
+ \|u(0)\|_{\mathcal{V}}.
$$
Clearly, $(E, \|\cdot\|_E)$ is a Banach space. Note that if $u(\cdot)\in H^1(0,\tau,{\mathcal{H}})$ 
then $u \in {\mathscr{C}}([0,\tau],{\mathcal{H}})$ and hence $u(0)$, needed in the definition of 
$E$, is well defined.

\begin{proposition}
\label{pro-bdd}
The space $E$ is continuously embedded into $L^\infty(0,\tau,{\mathcal{V}})$.
\end{proposition}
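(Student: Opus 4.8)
The statement to prove is that the maximal regularity space $E$ embeds continuously into $L^\infty(0,\tau,\mathcal{V})$. The natural strategy is an energy estimate: for $u\in E$, I want to control $\sup_{t}\|u(t)\|_{\mathcal V}$ by $\|u\|_E$. Since the forms $\fra(t)$ are symmetric and quasi-coercive, the quantity $t\mapsto \fra(t,u(t),u(t))+\mu\|u(t)\|_{\mathcal H}^2$ is comparable (uniformly in $t$) to $\|u(t)\|_{\mathcal V}^2$ by \eqref{coercive} and \eqref{continuity}. So it suffices to bound $\fra(t,u(t),u(t))$. The plan is to differentiate $t\mapsto \fra(t,u(t),u(t))$ and integrate.

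First I would work on a single interval $(t_i,t_{i+1})$ where $t\mapsto\fra(t)$ is $\alpha$-Hölder with $\alpha>1/2$; the general case follows by summing over the finitely many subintervals (starting from $u(0)\in\mathcal V$, which is part of the definition of $E$, then passing from the value at $t_i$ to values on the next interval). On such an interval, for a.e. $t$, one has formally
\[
\frac{d}{dt}\,\fra(t,u(t),u(t)) = \dot{\fra}(t)(u(t),u(t)) + 2\,\mathrm{Re}\,\fra(t,u'(t),u(t)),
\]
but since we only have Hölder (not Lipschitz) regularity in $t$, the first term need not make sense pointwise. Instead I would argue via difference quotients / a density argument, or — more robustly — compare $\fra(t,u(t),u(t))$ with $\fra(s,u(s),u(s))$ directly: write
\[
\fra(t,u(t),u(t)) - \fra(s,u(s),u(s)) = \bigl(\fra(t)-\fra(s)\bigr)(u(t),u(t)) + \fra(s,u(t)-u(s),u(t)+u(s)).
\]
The first term is bounded by $M_i|t-s|^\alpha\|u(t)\|_{\mathcal V}^2$ using \eqref{H12}. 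For the second term, using that $\fra(s,\cdot,\cdot)$ on $\mathcal V$ pairs with $A(s)\cdot$ on the subspace $D(A(s))$, and that $u(s)\in D(A(s))$, one gets $\fra(s,u(t)-u(s),u(s)) = \langle u(t)-u(s),A(s)u(s)\rangle_{\mathcal H}$ — but this only handles one of the two factors. The cleaner route is: for $h>0$ small, estimate $\fra(t+h,u(t+h),u(t+h))-\fra(t,u(t),u(t))$ by splitting the form-increment (Hölder bound) plus $2\,\mathrm{Re}\,\fra(t,u(t+h)-u(t),u(t))$; the last equals $2\,\mathrm{Re}\langle u(t+h)-u(t),A(t)u(t)\rangle_{\mathcal H} = 2\,\mathrm{Re}\int_t^{t+h}\langle u'(r),A(t)u(t)\rangle_{\mathcal H}\,dr$, plus a cross term $\fra(t,u(t+h)-u(t),u(t+h)-u(t))$ that is $O(h^{2\alpha})\cdot(\ldots)$ after one more Hölder step — wait, more simply it is controlled by $M\|u(t+h)-u(t)\|_{\mathcal V}^2$, which is not obviously small. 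To avoid this I would instead not symmetrize and write $\fra(t,u(t+h),u(t+h))-\fra(t,u(t),u(t)) = \fra(t,u(t+h),u(t+h)-u(t)) + \fra(t,u(t+h)-u(t),u(t))$, estimate each piece using $A(t)$-pairing on whichever argument lies in $D(A(t))$ — but $u(t+h)$ need not lie in $D(A(t))$.

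Given these subtleties, the soundest plan is the classical one via a regularization and Gronwall. Approximate: for each $n$, define $u_n$ by solving (P) on $(t_i,t_{i+1})$ with a piecewise-constant-in-time frozen form $\fra(t_i^{(n,j)})$ on a fine partition, so that on each tiny subinterval the problem is autonomous and $t\mapsto\fra(t,u_n(t),u_n(t))$ is genuinely $C^1$ there; apply the product rule, integrate, use \eqref{H12} to bound the jumps, and pass to the limit using the maximal regularity a priori estimate \eqref{apriori-beta} which gives $u_n\to u$ in $E$. Concretely the estimate one obtains is
\[
\|u(t)\|_{\mathcal V}^2 \;\lesssim\; \fra(t,u(t),u(t)) + \mu\|u(t)\|_{\mathcal H}^2 \;\lesssim\; \fra(0,u(0),u(0)) + \int_0^\tau\!\Bigl(\|A(r)u(r)\|_{\mathcal H}^2 + \|u'(r)\|_{\mathcal H}^2 + \|u(r)\|_{\mathcal V}^2\Bigr)dr,
\]
where the $\int \|u(r)\|_{\mathcal V}^2\,dr$ term comes from the Hölder increments $\int M_i|t-s|^{\alpha-1}\|u\|_{\mathcal V}^2$ (integrable since $\alpha>1/2>0$, in fact $\alpha>0$ suffices here; the $\alpha>1/2$ is only needed for the existence Theorem~\ref{thm:HO}). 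Since $\fra(0,u(0),u(0))\le M\|u(0)\|_{\mathcal V}^2$ and $\int\|u(r)\|_{\mathcal V}^2\,dr \le \tau\sup_r\|u(r)\|_{\mathcal V}^2$ — hmm, that would be circular; instead bound $\int_0^\tau\|u(r)\|_{\mathcal V}^2\,dr$ a priori by interpolation: $\|u(r)\|_{\mathcal V}^2 \lesssim \|u(r)\|_{\mathcal H}\|A(r)u(r)\|_{\mathcal H} + \|u(r)\|_{\mathcal H}^2 \lesssim \|u'(r) - f(r)\|_{\mathcal H}^2 + \ldots$, so $\int\|u(r)\|_{\mathcal V}^2\,dr \lesssim \|u\|_E^2$. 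Putting it together yields $\sup_t\|u(t)\|_{\mathcal V}\lesssim\|u\|_E$, which is the claimed embedding.

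\textbf{Main obstacle.} The delicate point is the product-rule / differentiation of $t\mapsto\fra(t,u(t),u(t))$ when $\fra$ is only Hölder (not Lipschitz or $C^1$) in time, so the ``$\dot\fra$'' term does not exist pointwise; this is why I would pass through a piecewise-autonomous approximation where the product rule is legitimate and then take limits using the maximal regularity estimate \eqref{apriori-beta}. The rest — the coercivity sandwich, the interpolation bound on $\int\|u\|_{\mathcal V}^2$, and the summation over the finitely many Hölder intervals — is routine.
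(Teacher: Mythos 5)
Your overall strategy (differentiate $t\mapsto\fra(t,u(t),u(t))$, regularize because $\fra$ is only H\"older in $t$, then apply Gronwall) is not the paper's, and it contains a gap that I do not see how to close for $\alpha<1$. The paper instead starts from the Duhamel-type representation \eqref{uu},
\[
u(t)=\int_0^t e^{-(t-s){\mathcal{A}}(t)}({\mathcal{A}}(t)-{\mathcal{A}}(s))u(s)\,{\rm d}s+e^{-tA(t)}u(0)+\int_0^t e^{-(t-s)A(t)}f(s)\,{\rm d}s,
\]
and estimates each term in ${\mathcal{V}}$. The crucial ingredient is the smoothing bound \eqref{hol}, $\|e^{-r{\mathcal{A}}(t)}\|_{{\mathscr{L}}({\mathcal{V}}',{\mathcal{V}})}\le C/r$, which, paired with $\|({\mathcal{A}}(t)-{\mathcal{A}}(s))u(s)\|_{{\mathcal{V}}'}\le C(t-s)^{\alpha}\|u(s)\|_{\mathcal{V}}$ from \eqref{H12}, produces the integrand $(t-s)^{\alpha-1}\|u(s)\|_{\mathcal{V}}$ and allows Cauchy--Schwarz in \eqref{a1} precisely because $2(\alpha-1)>-1$, i.e.\ $\alpha>1/2$; Gronwall then closes the estimate. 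So the kernel $|t-s|^{\alpha-1}$ you invoke is generated by the $1/(t-s)$ singularity of the analytic semigroup acting from ${\mathcal{V}}'$ to ${\mathcal{V}}$ --- an ingredient with no counterpart in an energy estimate. (In particular your aside that ``$\alpha>0$ suffices here'' is incorrect for this proposition: $\alpha>1/2$ is exactly what makes \eqref{a1} work.)

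The concrete gap in your regularization: freezing the form on a partition of mesh $h$, each switch of form costs up to $M_i h^{\alpha}\|u_n(s_j)\|_{\mathcal{V}}^2$ by \eqref{H12}, and there are about $\tau/h$ switches, so the accumulated error is of order $\tau h^{\alpha-1}\sup_s\|u_n(s)\|_{\mathcal{V}}^2$, which diverges as $h\to0$ whenever $\alpha<1$; it cannot be absorbed by Gronwall and it does not converge to $\int_0^t|t-s|^{\alpha-1}\|u(s)\|_{\mathcal{V}}^2\,{\rm d}s$ or to $\int_0^t\|u(s)\|_{\mathcal{V}}^2\,{\rm d}s$. (You identified the equivalent obstruction in the non-regularized version yourself: the quadratic term $\fra(t,u(t+h)-u(t),u(t+h)-u(t))$ is not summable over a partition without already knowing $u\in{\mathscr{C}}([0,\tau],{\mathcal{V}})$, which is what is being proved.) A second difficulty is the asserted convergence $u_n\to u$ in $E$: since $D(A(t))$ varies with $t$ (here through $\beta(t,\cdot)$), the defect $({\mathcal{A}}_n(t)-{\mathcal{A}}(t))u(t)$ lives only in ${\mathcal{V}}'$, not in ${\mathcal{H}}$, so the maximal regularity estimate \eqref{apriori-beta} does not apply to it. This is precisely why the $\alpha=1$ (Lipschitz) case is handled by energy methods in \cite{ADLO}, while the present proposition, for $\alpha\in(1/2,1)$, requires the representation-formula argument.
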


\begin{proof} 
First by adding a positive constant to $A(t)$, it is clear that we may suppose without loss 
of generality that \eqref{coercive} holds with $\mu = 0$.

\noindent
Let $u\in E$ and set  $f:=\partial_t u+A(\cdot)u(\cdot) \in L^2(0, \tau,{\mathcal{H}})$. 
As in \cite{HO}, taking the derivative of $s\mapsto v(s):=e^{-(t-s){\mathcal{A}}(t)}u(s)$ for 
$0<s\le t<\tau$ and then integrating from $0$ to $t$ it follows  that 
\begin{equation}
\label{uu}
u(t) = \int_0^t e^{-(t-s){\mathcal{A}}(t)} ({\mathcal{A}}(t)-{\mathcal{A}}(s))u(s)\,{\rm d}s 
+ e^{-tA(t)}u(0) + \int_0^t e^{-(t-s) A(t)} f(s)\,{\rm d}s.
\end{equation}
We estimate the norm in ${\mathcal{V}}$ of each term. Recall that $-{\mathcal{A}}(t)$ 
generates a bounded holomorphic semigroup in ${\mathcal{V}}'$ (see \cite[Chapter~1]{Ou05}) 
with bound independent of $t \in [0, \tau]$ thanks to \eqref{continuity} and \eqref{coercive}. 
In particular, there exist a constant $C$ such that for all $s > 0$ and $t \in [0, \tau]$
\begin{equation}
\label{hol}
\|e^{-s{\mathcal{A}}(t)}\|_{{\mathscr{L}}({\mathcal{V}}',{\mathcal{V}})} 
\le \delta^{-1} \|{\mathcal{A}}(t) e^{-s{\mathcal{A}}(t)}\|_{{\mathscr{L}}({\mathcal{V}}')} 
\le \frac{C}{s}.
\end{equation}
Therefore,
\begin{align*}
\Bigl\|\int_0^t e^{-(t-s){\mathcal{A}}(t)} ({\mathcal{A}}(t)-{\mathcal{A}}(s))u(s)\,{\rm d}s\Bigr\|_{{\mathcal{V}}} 
&\le \int_0^t \frac{C}{t-s}\,\|({\mathcal{A}}(t)-{\mathcal{A}}(s))u(s) \|_{{\mathcal{V}}'} \,{\rm d}s
\\[4pt]
&\le \int_0^t \frac{C \omega(t-s)}{t-s}\,\|u(s)\|_{\mathcal{V}}\,{\rm d}s
\end{align*} 
where $r\mapsto\omega(r)$ is piecewise $\alpha-$H\"older continuous on $[0, \tau]$ with $\alpha>1/2$ 
by assumptions. By the Cauchy-Schwarz inequality we conclude that
\begin{equation}
\label{a1}
\Bigl\|\int_0^t e^{-(t-s){\mathcal{A}}(t)}({\mathcal{A}}(t)-{\mathcal{A}}(s))u(s)\,{\rm d}s\Bigr\|_{{\mathcal{V}}} 
\le C' \Bigl(\int_0^t \|u(s)\|_{\mathcal{V}}^2\,ds \Bigr)^{1/2}.
\end{equation}
The second term is easily estimated since the semigroup $(e^{-sA(t)})_{s \ge 0}$ is uniformly 
bounded on ${\mathcal{V}}$ (see again \cite[Chapter~1]{Ou05}). Thus
\begin{equation}
\label{a2}
\|e^{-tA(t)}u(0) \|_{\mathcal{V}} \le C \|u(0)\|_{\mathcal{V}} \quad\mbox{for all }t\ge0. 
\end{equation}
It remains to estimate the third term. Set $v(s) := \int_0^s e^{-(s-r)A(t)} f(r)\,{\rm d}r$, $s\ge 0$. 
The function $v$ satisfies
$$ 
\partial_s v + A(t) v = f, \quad v(0) = 0.
$$
Fix $\varepsilon > 0$. Since $A(t)^{1/2} e^{-\varepsilon A(t)}$ is a bounded operator on 
$\mathcal{H}$ we have
\begin{align*}
\frac{1}{2}\, \frac{d}{ds} \|A(t)^{1/2}e^{-\varepsilon A(t)} v(s)\|_{\mathcal{H}}^2 
&= ( A(t)^{1/2} e^{-\varepsilon A(t)} v'(s), A(t)^{1/2} e^{-\varepsilon A(t)}v(s))
\\[4pt]
&= ( -A(t) v(s) + f(s) , A(t)e^{-2\varepsilon A(t)} v(s)).
\end{align*}
Thus,
\begin{align*}
\frac{1}{2}\, \frac{d}{ds} \|A(t)^{1/2}e^{-\varepsilon A(t)} v(s)\|_{\mathcal{H}}^2  + 
\|A(t) e^{-\varepsilon A(t)} v(s)\|_{\mathcal{H}}^2
&= (f(s) , A(t) e^{-2\varepsilon A(t)} v(s))\\[4pt]
&\le \frac{1}{2} \, \|f(s)\|_{\mathcal{H}}^2+\frac{1}{2}\,\|A(t)e^{-2\varepsilon A(t)}v(s)\|_{\mathcal{H}}^2.
\end{align*}
Next we integrate from $0$ to $t$ and then letting $\varepsilon\to 0$ it follows that 
$$
\Bigl\|A(t)^{1/2} \int_0^t e^{-(t-r) A(t)} f(r)\,{\rm d}r\Bigr\|_{\mathcal{V}}^2
\le \|f\|_{L^2(0,\tau, {\mathcal{H}})}^2.
$$
From the  coercivity assumption \eqref{coercive} with $\mu= 0$, it follows that 
\begin{equation}
\label{a3}
\Bigl\| \int_0^t e^{-(t-r)A(t)} f(r)\,{\rm d}r\Bigr\|_{\mathcal{V}}^2 
\le \delta^{-1} \|f\|_{L^2(0,\tau,{\mathcal{H}})}^2.
\end{equation}
We obtain from \eqref{uu} and the forgoing estimates \eqref{a1}--\eqref{a3} that for some constant 
$C_0>0$
$$ 
\|u(t)\|_{\mathcal{V}}^2 \le C_0 \Bigl[\int_0^t \|u(s)\|_{\mathcal{V}}^2\,{\rm d}s 
+ \|u(0)\|_{\mathcal{V}}^2+\|f\|_{L^2(0,\tau,{\mathcal{H}})}^2 \Bigr].
$$
It follows from Gronwall's lemma that 
$$
\|u(t) \|_{\mathcal{V}}^2 \le C_0\,e^{C_0 \tau} \Bigr[\|u(0)\|_{\mathcal{V}}^2 
+\|f\|_{L^2(0, \tau,{\mathcal{H}})}^2 \Bigr].
$$
Replacing $f(t)$ by its expression  $f(t)=\partial_t u(t)+A(t)u(t)$, the conclusion of the proposition 
follows. 
\end{proof}


\section{The Navier-Stokes system with Robin boundary conditions}
\label{secNS}

As in the previous sections, $\Omega$ denotes a bounded ${\mathscr{C}}^{1,1}$ or convex domain 
of $\mathbb{R}^3$ and $\beta:[0, \tau]\times\partial\Omega\to{\mathscr{M}}_3({\mathbb{R}})$ 
satisfies \eqref{TES1})--\eqref{TES3} and 
\eqref{Holder-beta} for some $\alpha>\frac{1}{2}$. Recall from Section~\ref{secRS} that 
$$
H=\bigl\{u\in L^2(\Omega,{\mathbb{R}}^3); {\rm div}\,u=0\mbox{ in } \Omega, 
\nu\cdot u=0\mbox{ on } \partial\Omega\bigr\}
$$
and 
$$
V=\bigl\{u\in L^2(\Omega,{\mathbb{R}}^3); {\rm div}\,u = 0 \mbox{ in } \Omega,  
{\rm curl}\,u\in L^2(\Omega,{\mathbb{R}}^3) \mbox{ and } \nu\cdot u=0 \mbox{ on }\partial\Omega\bigr\}.
$$
The latter space is the domain of the bilinear symmetric form which gives rise to the 
Robin-Stokes operator $A_\beta$ defined in Section~\ref{secRS}.

\noindent
We consider the Navier-Stokes system with Robin-type boundary conditions on the time interval 
$[0,\tau]$
\begin{equation}  
\label{PR-A-BETA1} \tag{NS}
\left\{
\begin{array}{rclcl}
\partial_t u-\Delta u + \nabla \pi-u \times {\rm curl}\,u &=& 0 &\mbox{in}&[0,\tau]\times\Omega
\\[4pt]
{\rm div}\,u &=& 0 &\mbox{in}&[0,\tau]\times\Omega 
\\[4pt]
\nu \cdot u &=& 0 &\mbox{on} &[0,\tau]\times\partial\Omega 
\\[4pt]
\nu \times {\rm curl}\,u &=& \beta u &\mbox{on} &[0,\tau]\times\partial\Omega
\\[4pt]
u(0)&=&u_0&\mbox{in}&\Omega.
\end{array}\right.
\end{equation}
Our main result in this section is the following existence and uniqueness result for (NS).

\begin{theorem}
\label{thm:NS}
There exists $\epsilon > 0$ such that for every $u_0 \in V$ with $\|u_0\|_V\le\epsilon$, 
there exists a unique $u\in H^1(0,\tau,H)$ with $t \mapsto A_{\beta(t)}u(t) \in L^2(0,\tau, H)$ and  
$\pi \in L^2(0,\tau, H^1(\Omega))$ such that
$(u, \pi)$ satisfies (NS) for a.e. $(t,x) \in [0, \tau] \times \Omega$. In addition there exists 
a constant $C$ independent of $u$ and $\pi$ such that 
\begin{equation}
\label{estNS}
\|u\|_{H^1(0,\tau,H)} + \|-\Delta u\|_{L^2(0,\tau,L^2(\Omega,{\mathbb{R}}^3))}
+\|\nabla\pi\|_{L^2(0,\tau,L^2(\Omega,{\mathbb{R}}^3))} \le C \epsilon.
\end{equation}
\end{theorem}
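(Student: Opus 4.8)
The plan is to solve (NS) by a fixed point argument in the space $E$ of maximal regularity introduced in \eqref{def:E}, built around the linear theory of Theorem~\ref{thm:reg} and the boundedness $E\hookrightarrow L^\infty(0,\tau,V)$ from Proposition~\ref{pro-bdd}. Given $v\in E$, define $\Phi(v)=u$ to be the solution of the \emph{linear} non-autonomous problem \eqref{PR-A-BETA} with right-hand side $f={\mathbb{P}}\bigl(v\times{\rm curl}\,v\bigr)$ and initial datum $u_0$; note that $\Phi$ is well defined precisely when $f\in L^2(0,\tau,H)$, which must be checked. Once this is done, a solution of (NS) in the sense of the theorem is a fixed point of $\Phi$: indeed $\partial_t u+A_{\beta(t)}u={\mathbb{P}}(u\times{\rm curl}\,u)$ together with the description \eqref{RSOp1} of $D(A_\beta)$ and the Hodge--Helmholtz decomposition recovers the pressure $\pi\in L^2(0,\tau,H^1(\Omega))$ via $-\Delta u+\nabla\pi=A_{\beta(t)}u-{\mathbb{P}}(u\times{\rm curl}\,u)-({\rm Id}-{\mathbb{P}})(u\times{\rm curl}\,u)$, i.e. $\nabla\pi=({\rm Id}-{\mathbb{P}})\bigl(\Delta u+u\times{\rm curl}\,u\bigr)$, which lies in $L^2(0,\tau,L^2)$, hence $\pi\in L^2(0,\tau,H^1(\Omega))$.

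The core of the argument is the nonlinear estimate. For $v\in E$ I expect to bound $\|v\times{\rm curl}\,v\|_{L^2(0,\tau,H)}$ by H\"older's inequality as $\|v(t)\|_{L^6}\|{\rm curl}\,v(t)\|_{L^3}$, then use the Sobolev embedding $H^1(\Omega)\hookrightarrow L^6(\Omega)$ together with \eqref{WdansH1} to control $\|v(t)\|_{L^6}\lesssim\|v(t)\|_V$ (since ${\rm div}\,v=0$), and Corollary~\ref{corcurl} to control $\|{\rm curl}\,v(t)\|_{L^3}\lesssim\|A_{\beta(t)}v(t)\|_H+(\|\beta\|_\infty+1)\|v(t)\|_V$. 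Combining with Proposition~\ref{pro-bdd}, which gives $\|v\|_{L^\infty(0,\tau,V)}\le C\|v\|_E$, one obtains
\begin{equation*}
\|{\mathbb{P}}(v\times{\rm curl}\,v)\|_{L^2(0,\tau,H)}
\le C\,\|v\|_{L^\infty(0,\tau,V)}\bigl(\|A_{\beta(\cdot)}v\|_{L^2(0,\tau,H)}+\|v\|_{L^2(0,\tau,V)}\bigr)
\le C'\,\|v\|_E^2,
\end{equation*}
and similarly, using bilinearity, $\|{\mathbb{P}}(v_1\times{\rm curl}\,v_1)-{\mathbb{P}}(v_2\times{\rm curl}\,v_2)\|_{L^2(0,\tau,H)}\le C'(\|v_1\|_E+\|v_2\|_E)\|v_1-v_2\|_E$. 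Feeding this into the maximal-regularity estimate \eqref{apriori-beta1} and noting $\|u_0\|_V\le\epsilon$ yields $\|\Phi(v)\|_E\le C_{MR}(C'\|v\|_E^2+\epsilon)$ and a matching Lipschitz bound.

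It then follows by the standard Banach fixed point scheme: choosing $R=2C_{MR}\epsilon$ and $\epsilon$ small enough that $C_{MR}C'R^2\le R/2$ and $C_{MR}C'\cdot 2R\le 1/2$, the map $\Phi$ maps the ball $\overline{B}_E(0,R)$ into itself and is a contraction there, hence has a unique fixed point $u$ with $\|u\|_E\le R=2C_{MR}\epsilon$. The a priori bound \eqref{estNS} is read off from $\|u\|_E\le 2C_{MR}\epsilon$, the identity $-\Delta u=A_{\beta(t)}u-\nabla\pi$ on $D(A_\beta)$, and the pressure formula above, with the caveat that $\|-\Delta u\|_{L^2(0,\tau,L^2)}$ and $\|\nabla\pi\|_{L^2(0,\tau,L^2)}$ are estimated by $\|A_{\beta(\cdot)}u\|_{L^2(0,\tau,H)}+\|{\mathbb{P}}(u\times{\rm curl}\,u)\|_{L^2(0,\tau,H)}$ and $\|u\times{\rm curl}\,u\|_{L^2(0,\tau,L^2)}$ respectively, all absorbed into $C\epsilon$. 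Uniqueness among \emph{all} solutions in the stated class (not just in the small ball) follows by applying the difference estimate directly to two solutions $u_1,u_2$ with $u_1(0)=u_2(0)$ and a Gronwall argument on $[0,t]$, or by a continuation argument; this last point, matching the uniqueness claim without an a priori smallness of the competing solution, is the step I expect to require the most care, and where one may instead need to invoke uniqueness on short time intervals and iterate. The other mildly delicate point is verifying that $v\times{\rm curl}\,v$ actually lands in $H$ after projection and that $f\in L^2(0,\tau,H)$ is genuinely attained, which is exactly what the $L^6$--$L^3$ splitting above secures.
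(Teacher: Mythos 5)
Your proposal is correct and follows essentially the same route as the paper: the same $L^6$--$L^3$ splitting of the nonlinearity via Corollary~\ref{corcurl} and Proposition~\ref{pro-bdd}, fed into the maximal regularity estimate \eqref{apriori-beta1}, a Banach fixed point in a small ball of $E$, and the same Hodge--Helmholtz recovery of the pressure; the paper merely phrases the iteration as $u=a+B(u,u)$ with a symmetrized bilinear map $B(u,v)$, which is equivalent to your affine map $\Phi$. The global-uniqueness caveat you raise is a fair one, but note that the paper's own argument also only establishes uniqueness within the small ball $\overline{B}_E(0,2\delta)$.
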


\begin{proof}  
Recall the maximal regularity space 
$$
E=\bigl\{u\in H^1(0,\tau,H); u(t) \in D(A_{\beta(t)})\, {\rm a.e.}, t\mapsto A_{\beta(t)}u(t)\in L^2(0,\tau,H)\mbox{ and }u(0)\in V\bigr\}.
$$ 
For all $u\in E$, $u(t)\in D(A_{\beta(t)})$ for a.e. $t \in [0, \tau]$. Then by Corollary~\ref{corcurl}
$$
\|{\rm curl}\,u(t)\|_3\le C_\Omega\|A_{\beta(t)}u(t)\|_H + C( \|\beta\|_\infty+1) \|u(t)\|_V.
$$
Using Proposition~\ref{pro-bdd} and taking the $L^2$-norm in time it follows that 
\begin{equation}
\label{a10}
\|{\rm curl}\,u\|_{L^2(0, \tau,L^3(\Omega,{\mathbb{R}}^3))}  
\le C_\Omega \|u\|_E + C (\|\beta\|_\infty+1)\|u\|_E = C_1\|u\|_E.
\end{equation}
On the other hand, by \eqref{WdansH1}, the classical Sobolev embedding of  
$H^1(\Omega) $ into $L^6(\Omega)$ in dimension 3 and Proposition~\ref{pro-bdd}, 
there exists a constant $C_2$ such that for every $u\in E$ 
\begin{equation}
\label{a11}
\|u\|_{L^\infty(0, \tau,L^6(\Omega,{\mathbb{R}}^3))} \le C_2\|u\|_E.
\end{equation}
Let $u_0 \in V$. By Theorem~\ref{thm:reg}, there exists a solution $a\in E$ 
of the problem
\begin{equation}  
\label{NSa}
\left\{
\begin{array}{rcl}
\partial_t a + A_{\beta(t)}a &=& 0
\\[4pt]
a(0)&=&u_0.
\end{array}\right.
\end{equation}
with 
\begin{equation}
\label{trucmuche}
\|a\|_E \le C_{MR} \|u_0\|_V.
\end{equation}
Let $u, v \in E$ and set $f :=\frac{1}{2}\,\mathbb{P}(u\times{\rm curl}\,v+v\times{\rm curl}\,u)$. 
By \eqref{a10} and \eqref{a11}, $f\in L^2(0,\tau, H)$ and 
\begin{equation}
\label{a12}
\|f\|_{L^2(0, \tau,H) }\le C_1C_2 \|u\|_E\|v\|_E.
\end{equation}
Again by Theorem~\ref{thm:reg} there exists $w$ solution of 
\begin{equation}  
\label{NSw}
\left\{
\begin{array}{rcl}
\partial_t w + A_{\beta(t)} w &=&f 
\\[4pt]
w(0)&=&0.
\end{array}\right.
\end{equation}
In addition, $w \in E$ and satisfies $\|w\|_E \le C_{MR} \|f\|_{L^2(0,\tau,H)}$. 

We define the bilinear application
$$ 
B: E \times E \to E, \quad (u,v) \mapsto w.
$$
Then the latter estimate gives
\begin{equation}
\label{NSuvw}
\|B(u,v)\|_E = \|w\|_E \le C_{MR} \|f\|_{L^2(0,\tau,H)}.
\end{equation}
Thus we have from \eqref{a12}
\begin{equation}
\label{uvw}
\|B(u,v)\|_E \le C_{MR} C_1 C_2 \|u\|_E \|v\|_E.
\end{equation}
We now use Picard's contraction principle. Let $\delta>0$ such that 
$\delta < \frac{1}{4C_{MR}C_1 C_2}$. If  $\|a\|_E \le \delta$, we define  
$$
\begin{array}{rcrcl}
T &:& \overline{B}_E(0, 2 \delta) &\to&  \overline{B}_E(0, 2 \delta) 
\\[4pt]
&& v & \mapsto & a + B(v,v).
\end{array}
$$
To see that $T$ maps $\overline{B}_E(0, 2 \delta)$ into itself, we use \eqref{uvw} so that for 
$v \in \overline{B}_E(0, 2 \delta)$
\begin{align*}
\|T(v)\|_E &\le \|a\|_E + \|B(v,v)\|_E
\\[4pt]
&\le \delta + C_{MR} C_1 C_2 \|v\|_E^2
\\[4pt]
&\le \delta + 4C_{MR}C_1 C_2 \delta^2 \ \le \ 2\delta.
\end{align*}
Moreover, the map $T$ is a strict contraction. Indeed, for every $v,w\in \overline{B}_E(0, 2 \delta)$
\begin{align*}
\|T(v)-T(w) \|_E &=  \| B(v+w,v-w)\|_E
\\[4pt]
&\le C_{MR} C_1 C_2 \|v+w\|_E \|v-w\|_E
\\[4pt]
&\le 4\delta C_{MR} C_1 C_2 \|v-w\|_E.
\end{align*}
Therefore there exists a unique $u\in\overline{B}_E(0, 2 \delta)$ satisfying $u = a + B(u,u)$. 
By \eqref{trucmuche}, the condition $\|a\|_E \le\delta$ is satisfied if 
$\|u_0\|_V \le \epsilon := \frac{\delta}{C_{MR}}$. 
It remains to prove that $u$ is a solution of (NS) for a.e. $(t,x) \in [0, \tau]\times\Omega$. 

Since $u = a + B(u,u)$  with $a$ the solution of \eqref{NSa} and $w = B(u,u)$ the solution of 
\eqref{NSw} with $v= u$ we obtain  
\begin{align*}
\partial_t u &= \partial_t a + \partial_t B(u,u)
\\[4pt]
&= -A_\beta a - A_\beta B(u,u) + {\mathbb{P}}(u\times{\rm curl}\,u)
\\[4pt]
&= -A_\beta u + {\mathbb{P}}(u\times{\rm curl}\,u).
\end{align*}
Since $u\in E$, $t\mapsto A_{\beta(t)}u(t)\in L^2(0,\tau, H)$ and hence
by Theorem~\ref{RSOp}, 
$$
t\mapsto {\rm curl}\,{\rm curl}\,u(t)=-\Delta u(t) \in L^2(0,\tau, L^2(\Omega,{\mathbb{R}}^3).
$$
Thus, $A_\beta u = -\Delta u + \nabla q$ with $q \in L^2(0,\tau,H^1(\Omega))$. 
In addition
$$ 
\nu\cdot u = 0 \quad\mbox{and} \quad \nu\times{\rm curl}\,u=\beta u
$$
for a.e. $(t,x) \in (0, \tau)\times\partial\Omega$. By the definition of ${\mathbb{P}}$
and integrability properties \eqref{a11} (for $u$) and \eqref{a10} (for ${\rm curl}\,u$),
${\mathbb{P}}(u\times{\rm curl}\,u) = u\times{\rm curl}\,u + \nabla p$ with 
$p\in L^2(0,\tau, H^1(\Omega))$.
Therefore, if we take $\pi := p + q$ we see that $(u, \pi)$ satisfy (NS) for a.e. 
$(t,x)\in[0,\tau]\times\Omega$. 
\end{proof}

{\small
\bibliographystyle{amsplain}

}
\end{document}